\documentclass[12pt]{amsart}

\usepackage{enumerate, amsmath, amsthm, amsfonts, amssymb, xy,  mathrsfs, graphicx, paralist}
\usepackage[usenames, dvipsnames]{xcolor}
\usepackage[margin=1in]{geometry} 
\usepackage[bookmarks, colorlinks=true, linkcolor=blue, citecolor=blue, urlcolor=blue]{hyperref}
\usepackage[boxsize=1em]{ytableau}
\usepackage{multicol}

\input xy
\xyoption{all}

\numberwithin{equation}{section}
\newtheorem{theorem}[equation]{Theorem}

\newtheorem{proposition}[equation]{Proposition}
\newtheorem{lemma}[equation]{Lemma}
\newtheorem{corollary}[equation]{Corollary}

\theoremstyle{definition}
\newtheorem{rmk}[equation]{Remark}
\newenvironment{remark}[1][]{\begin{rmk}[#1] \pushQED{\qed}}{\popQED \end{rmk}}
\newtheorem{eg}[equation]{Example}
\newenvironment{example}[1][]{\begin{eg}[#1] \pushQED{\qed}}{\popQED \end{eg}}
\newtheorem{defn}[equation]{Definition}
\newenvironment{definition}[1][]{\begin{defn}[#1]\pushQED{\qed}}{\popQED \end{defn}}

\newcommand{\rA}{\mathrm{A}}

\newcommand{\rB}{\mathrm{B}}

\newcommand{\bC}{\mathbf{C}}

\newcommand{\rC}{\mathrm{C}}

\newcommand{\rE}{\mathrm{E}}

\newcommand{\fG}{\mathfrak{G}}

\newcommand{\fH}{\mathfrak{H}}
\newcommand{\rH}{\mathrm{H}}

\newcommand{\fI}{\mathfrak{I}}

\newcommand{\bK}{\mathbf{K}}

\newcommand{\bO}{\mathbf{O}}

\newcommand{\rR}{\mathrm{R}}

\newcommand{\bS}{\mathbf{S}}

\newcommand{\cU}{\mathcal{U}}

\newcommand{\rU}{\mathrm{U}}

\newcommand{\bV}{\mathbf{V}}

\newcommand{\bW}{\mathbf{W}}

\newcommand{\bZ}{\mathbf{Z}}

\newcommand{\fg}{\mathfrak{g}}

\newcommand{\fl}{\mathfrak{l}}

\newcommand{\fn}{\mathfrak{n}}

\newcommand{\fp}{\mathfrak{p}}


\renewcommand{\phi}{\varphi}
\renewcommand{\emptyset}{\varnothing}

\newcommand{\arxiv}[1]{\href{http://arxiv.org/abs/#1}{{\tt arXiv:#1}}}

\makeatletter
\def\Ddots{\mathinner{\mkern1mu\raise\p@
\vbox{\kern7\p@\hbox{.}}\mkern2mu
\raise4\p@\hbox{.}\mkern2mu\raise7\p@\hbox{.}\mkern1mu}}
\makeatother


\DeclareMathOperator{\rank}{rank}

\DeclareMathOperator{\Sym}{Sym}

\DeclareMathOperator{\Tor}{Tor}

\newcommand{\GL}{\mathbf{GL}}

\newcommand{\Sp}{\mathbf{Sp}}

\newcommand{\fsl}{\mathfrak{sl}}
\newcommand{\fgl}{\mathfrak{gl}}
\newcommand{\fso}{\mathfrak{so}}
\newcommand{\fsp}{\mathfrak{sp}}

\DeclareMathOperator{\Rep}{Rep}
\newcommand{\bomega}{\mbox{\boldmath$\omega$}}

\title{Homology of analogues of Heisenberg Lie algebras}

\author{Steven V Sam}
\address{Department of Mathematics, University of California, Berkeley, CA}
\email{\href{mailto:svs@math.berkeley.edu}{svs@math.berkeley.edu}}
\urladdr{\url{http://math.berkeley.edu/~svs/}}

\date{July 10, 2015}

\thanks{The author was supported by a Miller research fellowship.}

\subjclass[2010]{%
05E10, 
17B30, 
17B56.
}

\begin{document}

\maketitle

\begin{abstract}
We calculate the homology of three families of $2$-step nilpotent Lie (super)algebras associated with the symplectic, orthogonal, and general linear groups. The symplectic case was considered by Getzler and the main motivation for this work was to complete the calculations started by him. In all three cases, these algebras can be realized as the nilpotent radical of a parabolic subalgebra of a simple Lie algebra, and our first approach relies on a theorem of Kostant, but is otherwise elementary and involves combinatorics of Weyl groups and partitions which may be of independent interest. Our second approach is an application of (un)stable representation theory of the classical groups in the sense of recent joint work of the author with Snowden, which is shorter and more conceptual. 
\end{abstract}

\section{Introduction}

We work over the complex numbers $\bC$. Let $V$ be a symplectic vector space of dimension $2n$ (with symplectic form $\omega_V \colon \bigwedge^2 V \to \bC$) and let $E$ be a vector space of dimension $k$. Define a 2-step nilpotent Lie algebra
\[
\fH = \fH_V(E) = (E \otimes V) \oplus \Sym^2(E)
\]
with Lie bracket on pure tensors given by
\[
[(e \otimes v, x), (e' \otimes v', x')] = (0, \omega_V(e,e') vv')
\]
where $e,e' \in E$, $v,v' \in V$ and $x,x' \in \Sym^2(E)$. The bracket is compatible with the action of $\GL(E) \times \Sp(V)$ on $\fH$. 

In \cite{getzler}, the problem of calculating the Lie algebra homology of $\fH$ is raised. We state the result in Theorem~\ref{thm:main}, but it requires some combinatorial preliminaries.

Given a partition $\lambda = (\lambda_1, \dots, \lambda_n)$, let $\lambda^\dagger$ denote the transpose partition. This is best explained in terms of Young diagrams. If $\lambda = (5,3,2)$, then $\lambda^\dagger = (3,3,2,1,1)$:
\[
\lambda = \ydiagram{5,3,2}, \qquad \lambda^\dagger = \ydiagram{3,3,2,1,1}.
\]
We set $\ell(\lambda)$ to be the number of nonzero parts of $\lambda$, and $|\lambda| = \sum_i \lambda_i$. So $\ell(5,3,2) = 3$ and $|(5,3,2)| = 10$. The irreducible polynomial representations of $\GL(E)$ are indexed by partitions $\lambda$ with $\ell(\lambda) \le \dim E$. We denote them by $\bS_\lambda(E)$. See \cite[\S 6.1]{fultonharris} for details. The construction of $\bS_\lambda(E)$ makes sense without any restriction on $\ell(\lambda)$, but $\bS_\lambda(E) = 0$ whenever $\ell(\lambda) > \dim E$. Similarly, the irreducible polynomial representations of $\Sp(V)$ are indexed by partitions $\mu$ with $2\ell(\mu) \le \dim V$, and we denote them by $\bS_{[\mu]}(V)$. See \cite[\S 17.3]{fultonharris} for details. 

Given a partition $\lambda$ we will define $i_{2n}(\lambda) \in \bZ_{\ge 0} \cup \{\infty\}$ and $\tau_{2n}(\lambda)$ which is either a partition with $\ell(\tau_{2n}(\lambda)) \le n$, or is undefined. These definitions, in a different form, originally appeared in \cite[\S 2.4]{koiketerada}. They will be used to describe the homology of $\fH(E)$. There are several descriptions of the functions $i_{2n}$ and $\tau_{2n}$ (which can be found in \cite[\S 3.4]{lwood}); we give one of them now and another one in Definition~\ref{defn:modrule-weyl}.

\begin{definition}[Modification rule -- border strip version] \label{defn:modrule-border}
If $\ell(\lambda) \le n$ we put $i_{2n}(\lambda)=0$ and $\tau_{2n}(\lambda)=\lambda$.  Suppose $\ell(\lambda)>n$. A {\bf border strip} is a connected skew Young diagram containing no $2 \times 2$ square. Let $R_{\lambda}$ be the connected border strip of length $2(\ell(\lambda)-n-1)$ which starts at the first box in the final row of $\lambda$, if it exists. The box of $R_\lambda$ in the bottom row of $R_\lambda$ is its {\bf first box}, and the box at the top row is its {\bf last box}. If $R_{\lambda}$ exists, is non-empty, and $\lambda \setminus R_{\lambda}$ is a partition, then we put $i_{2n}(\lambda)=c(R_{\lambda})+i_{2n}(\lambda \setminus R_{\lambda})$ and $\tau_{2n}(\lambda)=\tau_{2n}(\lambda \setminus R_{\lambda})$, where $c(R_{\lambda})$ denotes the number of columns that $R_{\lambda}$ occupies; otherwise we put $i_{2n}(\lambda)=\infty$ and leave 
$\tau_{2n}(\lambda)$ undefined.
\end{definition}

\begin{example}
Set $n=1$ and $\lambda = (4,3,3,2,2,1,1)$. Then $2(\ell(\lambda)-n-1) = 10$. We have shaded in the border strip $R_\lambda$ of length $10$ in the Young diagram of $\lambda$:
\[
\ytableaushort
{\none \none \none {\tt L}, \none, \none, \none, \none, \none, {\tt F}}
*[*(white)]{2,2,1,1}
*[*(lightgray)]{4,3,3,2,2,1,1}
\]
The first box is marked with an {\tt F} and the last box is marked with an {\tt L}. From this, we see that $i_2(\lambda) = 4 + i_2(2,2,1,1)$. Repeating the process, the next border strip to remove has length $4$ and $i_2(2,2,1,1) = 2 + i_2(2)$, and the result is the partition $(2)$, which has length $\le 1$, so we are done. The conclusion is that $i_2(\lambda) = 4 + 2 = 6$ and $\tau_2(\lambda) = (2)$.
\end{example}

\begin{theorem} \label{thm:main}
We have an isomorphism of $\GL(E) \times \Sp(V)$-modules
\[
\rH_i(\fH_V(E); \bC) = \bigoplus_{\substack{\lambda\\ |\lambda| - i_{2n}(\lambda) = i}} \bS_{\lambda^\dagger}(E) \otimes \bS_{[\tau_{2n}(\lambda)]}(V).
\]
\end{theorem}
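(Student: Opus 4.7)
The plan is to realize $\fH_V(E)$ as the nilpotent radical of a parabolic subalgebra of $\fg = \fsp_{2(n+k)}$ (where $k = \dim E$) and then apply Kostant's theorem. Concretely, I would form the symplectic space $W = E \oplus V \oplus E^*$ whose form restricts to $\omega_V$ on $V$ and pairs $E$ with $E^*$. The stabilizer of the isotropic subspace $E \subset W$ is a parabolic $\fp \subset \fsp(W)$ with Levi $\fl = \fgl(E) \oplus \fsp(V)$, and under the standard identification $\fsp(W) \cong \Sym^2(W)$ its nilpotent radical $\fn$ decomposes as $(E \otimes V) \oplus \Sym^2(E)$ with bracket induced by the symplectic form on $V$; this is exactly $\fH_V(E)$.

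By Kostant's theorem, as $\fl$-modules
\[
\rH_i(\fn; \bC) = \bigoplus_{\substack{w \in W^\fp \\ \ell(w) = i}} L_\fl(w \cdot 0),
\]
where $W^\fp$ is the set of minimal-length coset representatives for $W(\fl)$ in $W(\fg)$, $w \cdot 0 = w(\rho) - \rho$, and $L_\fl(\mu)$ is the irreducible $\fl$-module of highest weight $\mu$. The proof therefore reduces to constructing, for each partition $\lambda$ with $\lambda_1 \le k$ and $\tau_{2n}(\lambda)$ defined, a unique element $w_\lambda \in W^\fp$ satisfying
\[
w_\lambda \cdot 0 = (\lambda^\dagger_1, \dots, \lambda^\dagger_k; \tau_{2n}(\lambda)_1, \dots, \tau_{2n}(\lambda)_n) \quad \text{and} \quad \ell(w_\lambda) = |\lambda| - i_{2n}(\lambda),
\]
and showing that every $w \in W^\fp$ arises this way. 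The weight identification uses that $\bS_{\lambda^\dagger}(E) \otimes \bS_{[\mu]}(V)$ has highest weight $(\lambda^\dagger; \mu)$ with respect to $\fl$ in the standard coordinates $(\eps_1, \dots, \eps_k; \delta_1, \dots, \delta_n)$.

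The main obstacle is this combinatorial bijection, which I would establish by induction on $\ell(\lambda)$. In the base case $\ell(\lambda) \le n$, we have $\tau_{2n}(\lambda) = \lambda$ and $i_{2n}(\lambda) = 0$, and $w_\lambda$ is an explicit shuffle in the hyperoctahedral group whose length is $|\lambda|$. For the inductive step, I expect that removing the border strip $R_\lambda$ of length $2(\ell(\lambda) - n - 1)$ corresponds to multiplying $w_\lambda$ by a specific chain of simple reflections of $\fg$ whose net contribution to the length is exactly $c(R_\lambda)$. The obstruction to $\lambda \setminus R_\lambda$ being a partition (non-existence of $R_\lambda$ or the presence of a $2 \times 2$ square) should translate to the target weight having a stabilizer in $W(\fl)$ — equivalently, $w_\lambda$ failing to be a minimal coset representative — which matches the convention $i_{2n}(\lambda) = \infty$ under which the summand is absent. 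The heart of the argument is verifying that the border strip removal is precisely the shadow on the partition side of a natural reduction in type $C$ Weyl combinatorics, and in particular that the column count $c(R_\lambda)$ equals the length shift under that reduction.
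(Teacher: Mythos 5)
Your proposal is correct and follows essentially the same route as the paper's first proof: realizing $\fH_V(E)$ as the nilpotent radical of the parabolic in $\fsp(V \oplus E \oplus E^*)$ stabilizing $E$, applying Kostant's theorem with trivial coefficients, and reducing to the combinatorial bijection between minimal coset representatives and partitions via border-strip removal, with the length shift given by the column count $c(R_\lambda)$. The one detail to watch is the dual/sign convention (the paper works with weights $(-\mu_k^\dagger,\dots,-\mu_1^\dagger,\lambda_1,\dots,\lambda_n)$ and dualizes at the end, reflecting the choice of $\fn$ versus $\fn^-$), but this does not affect the substance of the argument.
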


In this paper we will give two proofs of this theorem. The first proof is in \S\ref{sec:proof1} and relies on a general theorem of Kostant and a simplification of the related combinatorics. This proof has the advantage of being elementary. A second proof is given in \S\ref{sec:proof2} and relies on recent joint work of the author with Andrew Snowden \cite{infrank}. This proof has the advantage of being shorter and more conceptual, though it is less elementary. It also reveals some extra structure of the problem as we consider the limit $\dim(V) \to \infty$. The second proof easily generalizes to calculate the homology of some $2$-step nilpotent Lie algebras and Lie superalgebras which can be considered as orthogonal and general linear versions of the Lie algebra $\fH(E)$. This will be done in \S\ref{sec:complements}.

\begin{remark}
\begin{compactenum}[1.]
\item From our discussion above, $\bS_{\lambda^\dagger}(E) = 0$ as soon as $\lambda_1 > \dim E = k$. Also, the largest possible size border strip that can be removed from a partition $\lambda$ is of size $\ell(\lambda) + \lambda_1 - 1$, so if $i_{2n}(\lambda) < \infty$, then we must have $\ell(\lambda) \le k + 2n + 1$ by Definition~\ref{defn:modrule-border}. So the partitions $\lambda$ in the above sum are limited to those that fit into a $(k + 2n + 1) \times k$ rectangle. In fact, we will see in \eqref{eqn:sizeWP} that there are exactly $2^k \binom{n+k}{k}$ partitions $\lambda$ appearing in the total homology.

\item The formulation of the calculation in \cite{getzler} is to keep $V$ fixed and to treat each $\rH_i(\fH(E); \bC)$ as a polynomial functor in $E$. Our formulation in Theorem~\ref{thm:main} does exactly this. \qedhere
\end{compactenum}
\end{remark}

\begin{example}
Take $n=k=2$. There are $24$ terms that appear in the homology of $\fH$ in this case. We list them below. The first entry is the homological degree, and the second entry is of the form $(-\mu_2,-\mu_1, \lambda_1, \lambda_2)$ to denote the representation $\bS_\mu(E) \otimes \bS_{[\lambda]}(V)$.
\begin{multicols}{4}
\noindent $0\quad(0, 0, 0, 0)\\
1\quad(0, -1, 1, 0)\\
2\quad(-1, -1, 2, 0)\\
2\quad(0, -2, 1, 1)\\
3\quad(-1, -2, 2, 1)\\
3\quad(0, -4, 1, 1)\\
4\quad(-2, -2, 2, 2)\\
4\quad(-1, -4, 2, 1)\\
4\quad(0, -5, 1, 0)\\
5\quad(-2, -4, 2, 2)\\
5\quad(-1, -5, 2, 0)\\
5\quad(0, -6, 0, 0)\\
6\quad(-3, -5, 2, 2)\\
6\quad(-2, -6, 2, 0)\\
6\quad(-1, -7, 0, 0)\\
7\quad(-5, -5, 2, 2)\\
7\quad(-3, -6, 2, 1)\\
7\quad(-2, -7, 1, 0)\\
8\quad(-5, -6, 2, 1)\\
8\quad(-3, -7, 1, 1)\\
9\quad(-6, -6, 2, 0)\\
9\quad(-5, -7, 1, 1)\\
10\quad(-6, -7, 1, 0)\\
11\quad(-7, -7, 0, 0)$
\end{multicols}
\noindent We remark on the Poincar\'e duality present in this calculation: in general, one has $\rH_i(\fg; W) \cong \rH_{\dim \fg - i}(\fg; W^* \otimes \det \fg)^*$ for any $\fg$-module $W$, and $\det \fH = (\det E)^7$ as a representation of $\GL(E) \times \Sp(V)$.
\end{example}

\begin{remark}
We have $\tau_{2n}(\lambda) = \emptyset$ if and only if $\bS_{\lambda}(E)$ appears in the minimal free resolution of the ideal $I_n$ of $2(n+1) \times 2(n+1)$ Pfaffians of the generic skew-symmetric matrix over $A = \Sym(\bigwedge^2 E)$ (assuming that $\dim E \ge \ell(\lambda)$). More precisely, there is a $\GL(E)$-action on $I_n$ and we ask that $\bS_\lambda(E)$ is a subrepresentation of $\Tor_\bullet^A(A/I_n, \bC)$. See \cite[Remark 3.7]{lwood} for details. This set is described in \cite[\S 6.4]{weyman}: every such partition is of the form
\[
(s + \alpha_1, \dots, s + \alpha_s, s, \dots, s, \alpha^\dagger_1, \dots, \alpha^\dagger_r)
\]
where, in the middle part, $s$ is repeated $2n+1$ times, and $\alpha$ is any partition with $\ell(\alpha) \le s$.
\end{remark}

\section{Proof of Theorem~\ref{thm:main} via Kostant's theorem} \label{sec:proof1}

We give a proof of Theorem~\ref{thm:main} using a theorem of Kostant and the fact that the Lie algebra $\fH$ is the nilpotent radical of a parabolic subalgebra of a semisimple Lie algebra. First we state Kostant's theorem in \S\ref{sec:kostant} and why it is relevant to our case in \S\ref{sec:parabolic}. Then we make all of the combinatorics explicit in \S\ref{sec:weylgroup} and \S\ref{sec:modrule} and finally give the proof at the end of the section.

\subsection{Kostant's theorem} \label{sec:kostant}

We state Kostant's theorem in this section (see \cite[Theorem 5.14]{kostant} or \cite[Theorem 3.2.7]{kumar} for a more general version). For simplicity, we only state it for finite-dimensional semisimple Lie algebras. For a review of the material in this section, we refer to \cite[Chapter 1]{kumar}. However, we will only need some specific cases of this general theory, and it will be made explicit in the following sections, so it is not logically necessary for the reader to be familiar with the general setting.

Let $\fg$ be a finite-dimensional semisimple Lie algebra and let $\fp \subset \fg$ be a parabolic subalgebra with nilpotent radical $\fn$ and Levi subalgebra $\fl$. Let $W$ be the Weyl group of $\fg$ and let $W_P$ be the Weyl group of $\fl$. Then $W_P \subset W$ is a parabolic subgroup. Let $\ell \colon W \to \bZ_{\ge 0}$ be the length function on $W$. Let $\rho$ be the sum of the fundamental weights of $\fg$. For a weight $\lambda$ of $\fg$ and an element $w \in W$, define 
\[
w \bullet \lambda = w(\lambda + \rho) - \rho.
\]
In each left coset of $W/W_P$, there is a unique minimal length representative, and we denote this set by $W^P$. If $\lambda$ is a dominant weight, then $w^{-1} \bullet \lambda$ restricts to a dominant weight of $\fl$ if and only if $w \in W^P$. For a dominant integral weight $\lambda$ of $\fg$, let $L(\lambda)$ be the irreducible $\fg$-module with highest weight $\lambda$, and similarly for a dominant integral weight $\mu$ of $\fl$, let $L_{\fl}(\mu)$ be the irreducible $\fl$-module with highest weight $\fl$. In both cases, $L(\lambda)$ and $L_\fl(\mu)$ are finite-dimensional. We think of $L(\lambda)$ as an $\fn$-module through the inclusion $\fn \subset \fg$.

\begin{theorem}[Kostant] \label{thm:kostant}
Let $\lambda$ be a dominant integral weight of $\fg$. We have an isomorphism of $\fl$-modules
\[
\rH_i(\fn; L(\lambda)^*) \cong \bigoplus_{\substack{w \in W^P\\ \ell(w) = i}} L_{\fl}(w^{-1} \bullet\lambda)^*.
\]
\end{theorem}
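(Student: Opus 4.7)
The plan is to prove the theorem Hodge-theoretically on the Chevalley--Eilenberg complex
\[
C_\bullet \;=\; \bigwedge\nolimits^\bullet \fn \otimes L(\lambda)^*
\]
computing $\rH_\bullet(\fn; L(\lambda)^*)$, following Kostant's original strategy. Since $\fl$ normalizes $\fn$ and acts on $L(\lambda)^*$ via $\fl \hookrightarrow \fg$, the boundary $\partial$ is $\fl$-equivariant, so each $\rH_i$ is naturally an $\fl$-module. Fix a compact real form of $\fg$; it provides an $\fl$-invariant positive-definite Hermitian form on $L(\lambda)^*$ and a $\fg$-invariant inner product on $\fg$ under which $\fn$ and the opposite nilradical $\fn^-$ are dually paired. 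These assemble into a Hermitian form on each $C_i$ with respect to which the $\fl$-action is unitary; let $\partial^*$ be the formal adjoint of $\partial$, and set $\Box = \partial\partial^* + \partial^*\partial$. Standard Hodge theory then yields an $\fl$-equivariant isomorphism
\[
\rH_i(\fn; L(\lambda)^*) \;\cong\; \ker\bigl(\Box|_{C_i}\bigr),
\]
reducing the problem to a description of the harmonic subspace.

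The core algebraic step is Kostant's identity for $\Box$. Choose dual root-vector bases of $\fn$ and $\fn^-$, expand $\partial$ and $\partial^*$ explicitly in these bases, and compute the anticommutator $\partial\partial^* + \partial^*\partial$; repeated application of the Jacobi identity collapses the cross-terms and shows that $\Box$ preserves the decomposition of $C_\bullet$ into Cartan weight spaces, acting on the weight-$\eta$ subspace of $C_i$ by the single explicit scalar
\[
\tfrac12\bigl(\|\lambda + \rho\|^2 - \|\eta + \rho\|^2\bigr).
\]
Since $\rho$ lies in the interior of the dominant chamber, this scalar vanishes precisely when $\eta + \rho$ lies in the $W$-orbit of $\lambda + \rho$, i.e.\ $\eta = w^{-1}\bullet\lambda$ for some $w \in W$. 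Hence every harmonic cycle has Cartan weight of the form $w^{-1}\bullet\lambda$, and, viewed as an $\fl$-highest weight, it is $\fl$-dominant exactly when $w \in W^P$.

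To finish, compare with the Euler--Poincar\'e character. As $\fl$-modules one has
\[
\sum_i(-1)^i[C_i] \;=\; [L(\lambda)^*]\cdot\prod_{\alpha\in\Phi(\fn)}(1-e^\alpha),
\]
and combining this with the Weyl denominator formula of $\fg$ relative to $\fl$ and the coset decomposition $W = W^P \cdot W_P$ with additive lengths produces the Bott character identity
\[
\sum_i(-1)^i\bigl[\rH_i(\fn; L(\lambda)^*)\bigr] \;=\; \sum_{w\in W^P}(-1)^{\ell(w)}\bigl[L_\fl(w^{-1}\bullet\lambda)^*\bigr].
\]
The Hodge-theoretic constraint from the previous paragraph restricts every $\fl$-highest weight occurring in $\rH_\bullet$ to the set $\{w^{-1}\bullet\lambda : w \in W^P\}$, whose members are pairwise distinct. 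Combined with the alternating-sum identity just displayed, this forces each $L_\fl(w^{-1}\bullet\lambda)^*$ to occur with multiplicity exactly one in the single homological degree $i = \ell(w)$, establishing the theorem.

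The principal obstacle is the Casimir computation producing the weight-space scalar $\tfrac12(\|\lambda+\rho\|^2 - \|\eta+\rho\|^2)$. Its verification requires a careful, lengthy manipulation of $\partial\partial^* + \partial^*\partial$ in dual root bases, involving many bracket terms whose cancellation hinges on the Jacobi identity and on the interaction between the $\fg$- and $\fl$-Casimirs through the shift $\rho = \rho_\fl + \rho_\fn$. The subsequent degree-matching step is secondary, but it does rely on the nontrivial fact that lengths add across the $W^P \cdot W_P$ decomposition, which is what allows the Bott character identity to split cleanly into $W^P$-indexed summands.
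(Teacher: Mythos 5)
The paper does not actually prove this statement: it is quoted from \cite[Theorem 5.14]{kostant} (see also \cite[Theorem 3.2.7]{kumar}), with only a remark about dualization. So the comparison is between your sketch and Kostant's original argument, which is indeed the Hodge-theoretic one you outline; but two of your steps are wrong or incomplete as written. The central Laplacian identity is misstated: Kostant's formula expresses $2\Box$ as the constant $\|\lambda+\rho\|^2-\|\rho\|^2$ minus a shifted Casimir operator of $\fl$, so $\Box$ acts by the scalar $\tfrac12\bigl(\|\lambda+\rho\|^2-\|\mu+\rho\|^2\bigr)$ on each $\fl$-\emph{isotypic component} of $\bigwedge^i\fn\otimes L(\lambda)^*$ of highest weight $\mu$, not on each Cartan weight space with $\eta$ the Cartan weight. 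The version you state, and its consequence that every harmonic cycle has Cartan weight of the form $w^{-1}\bullet\lambda$, is false: take $\lambda=0$ and $\fp$ a maximal parabolic of $\fsl_3$, so that $\fn$ is abelian and two-dimensional and $\partial=0$; then all of $C_1=\fn$ is harmonic, yet $\fn$ is an irreducible two-dimensional $\fl$-module whose two Cartan weights cannot both be of the prescribed form, since the theorem attaches a single such weight (the $\fl$-extremal one) to each irreducible constituent. Relatedly, $\|\mu+\rho\|=\|\lambda+\rho\|$ alone does not place $\mu+\rho$ in the $W$-orbit of $\lambda+\rho$; one needs Kostant's inequality $\|\mu+\rho\|\le\|\lambda+\rho\|$, with equality only on the orbit, valid for the particular $\fl$-dominant highest weights $\mu$ occurring in the chain spaces.

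Second, the endgame does not close as argued. Knowing that every $\fl$-highest weight of $\rH_\bullet$ lies in $\{w^{-1}\bullet\lambda : w\in W^P\}$, together with the Euler--Poincar\'e identity, does not force each $L_\fl(w^{-1}\bullet\lambda)^*$ into the single degree $\ell(w)$ with multiplicity one: multiplicity $2$ in degree $\ell(w)$ and $1$ in degree $\ell(w)+1$ produces the same alternating sum. The missing ingredient is Kostant's multiplicity lemma: the extreme weight attached to $w$ occurs as a weight of the full chain space $\bigwedge^i\fn\otimes L(\lambda)^*$ only for $i=\ell(w)$, and then with multiplicity one, because the unique way to write $w(\lambda+\rho)-\rho$ as an extreme weight of $L(\lambda)$ plus a sum of $i$ distinct roots uses the set $\Phi^+\cap w(\Phi^-)$, which has cardinality $\ell(w)$ and is contained in the roots of the nilradical exactly when $w\in W^P$. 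With that lemma in hand the multiplicity-one statement follows directly from the harmonic-space description, and the Euler-characteristic bookkeeping is not needed.
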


We remark that in \cite[Theorem 3.2.7]{kumar}, the result is stated for the nilpotent radical $\fn^-$ of the opposite parabolic subalgebra. This is isomorphic to $\fn$ as a Lie algebra, but is the dual of $\fn$ from the perspective of the Levi subalgebra $\fl$, and also the representation $L(\lambda)$ restricted $\fn^-$ becomes the representation $L(\lambda)^*$ restricted to $\fn$, which is why we have added the duals above.

\subsection{Some parabolic subalgebras} \label{sec:parabolic}

Put a symplectic form on $U = V \oplus E \oplus E^*$ by
\[
\omega((v,e,\phi), (v',e', \phi')) = \omega_V(v,v') + \phi'(e) - \phi(e').
\]
We have a $\bZ$-grading on $\fsp(U)$ which is supported on $[-2,2]$:
\[
\fsp(U) = \Sym^2(E^*) \oplus (E^* \otimes V^*) \oplus (\fgl(E) \times \fsp(V)) \oplus (E \otimes V) \oplus \Sym^2(E).
\]
If $\dim E = k$, then the Dynkin diagram of $\fsp(U)$ is of type $\rC_{n+k}$ and this grading is associated with the $k$th node in Bourbaki notation. In particular, $\fsp(U)_{\ge 0}$ is a parabolic subalgebra and $\fsp(U)_{> 0} = \fH(E)$ is its nilpotent radical. The Levi subalgebra is $\fsp(U)_0$, and the Dynkin diagram of its semisimple subalgebra $\fsl(k) \times \fsp(2n)$ is of type $\rA_{k-1} \times \rC_n$. 

The Weyl group $W(\rC_N)$ of type $\rC_N$ is the group of signed permutations on $N$ letters, so has size $2^N N!$, and the Weyl group $W(\rA_N)$ of type $\rA_N$ is the group of permutations on $N+1$ letters, so has size $(N+1)!$. In particular, 
\begin{align} \label{eqn:sizeWP}
|W^P| = \frac{|W(\rC_{n+k})|}{|W(\rA_{k-1} \times \rC_n)|} = \frac{2^{n+k}(n+k)!}{k! \cdot 2^nn!} = 2^k \binom{n+k}{k}.
\end{align}
So we can calculate the Lie algebra homology of $\fH(E)$ via Kostant's theorem (see \S\ref{sec:kostant}).

\subsection{Weyl groups of classical groups} \label{sec:weylgroup}

A weight of $\fsp(U)$ is a sequence $\lambda \in \bC^{n+k}$ and it is a dominant integral weight precisely when $\lambda_1 \ge \cdots \ge \lambda_{n+k} \ge 0$ and $\lambda_i \in \bZ_{\ge 0}$ for $i=1,\dots,n+k$. We have 
\begin{align} \label{eqn:rho}
\rho = (n+k, n+k-1, \dots, 2, 1) \in \bZ_{\ge 0}^{n+k}.
\end{align}
The Weyl group $W$ of $\fsp(U)$ is the group of signed permutations which acts on sequences of length $n+k$ in the obvious way. The simple reflections that generate $W$ are as follows: for $i=1,\dots,n+k-1$, $s_i$ is the transposition that switches positions $i$ and $i+1$, and $s_{n+k}$ negates the last entry. By definition, the length of an element $w \in W$ is the minimal number of $s_i$ needed to generate $w$. We will need something more explicit. To describe the length function on $W$, consider the sequence $w(\rho)$. Define the following statistics:
\begin{align*}
{\rm inv}(w) &= \#\{1 \le i < j \le n+k \mid w(\rho)_i < w(\rho)_j \},\\
{\rm neg}(w) &= \#\{1 \le i \le n+k \mid w(\rho)_i < 0\},\\
{\rm nsp}(w) &= \#\{1 \le i < j \le n+k \mid w(\rho)_i + w(\rho)_j < 0\},
\end{align*}
which we call inversions, negative entries, and negative pairs. Then 
\begin{align} \label{eqn:BClength}
\ell(w) = {\rm inv}(w) + {\rm neg}(w) + {\rm nsp}(w).
\end{align}
For a proof, see \cite[Proposition 8.1.1]{bjornerbrenti}. We remark that we have to define the statistics above in terms of $w(\rho) = (w(n+k), \dots, w(1))$ instead of $(w(1), \dots, w(n+k))$ because \cite{bjornerbrenti} use the reflection $s_0$ that negates the first entry instead of our $s_{n+k}$. 

In particular, for $w \in W$, we have $w^{-1} \in W^P$ if and only if 
\begin{align} \label{eqn:WP}
w(\rho)_1 > w(\rho)_2 > \cdots > w(\rho)_k \quad \text{ and } \quad w(\rho)_{k+1} > w(\rho)_{k+2} > \cdots > w(\rho)_{k+n} > 0.
\end{align}

\subsection{Relation to modification rule} \label{sec:modrule}

We first give an alternative version of Definition~\ref{defn:modrule-border}. We will not use it, but it may clear up some of the mystery behind why Definition~\ref{defn:modrule-border} is related to the formula in Theorem~\ref{thm:kostant}.

\begin{definition}[Modification rule -- Weyl group version] \label{defn:modrule-weyl}
Let $\cU$ be the set of integer sequences $(\dots, a_2, a_1)$. For $i>0$, let $s_i$ be the involution on $\cU$ that swaps $a_i$ and $a_{i+1}$. Let $s_0$ be the involution that negates $a_1$. We let $W(\rB\rC_\infty)$ be the group generated by the $s_i$, for $i \ge 0$.  Then $W(\rB\rC_\infty)$ is a Coxeter group of type $\rB\rC_{\infty}$, so it is equipped with a length function $\ell \colon W(\rB\rC_\infty) \to \bZ_{\ge 0}$.  Let $\sigma=(\dots, -(n+2), -(n+1))$. Define a new action of $W(\rB\rC_\infty)$ on $\cU$ by $w \bullet \lambda=w(\lambda+\sigma)-\sigma$. Given a partition $\lambda$, we interpret it as an element of $\cU$ via $(\dots,\lambda_2, \lambda_1)$. Then exactly one of the following two possibilities hold:
\begin{compactitem}
\item There exists a unique element $w \in W(\rB\rC_\infty)$ such that $w \bullet \lambda^{\dag}=\mu^{\dag}$ is a partition and $\ell(\mu) \le n$. We then put $i_{2n}(\lambda)=\ell(w)$ and $\tau_{2n}(\lambda)=\mu$.
\item There exists a non-identity element $w \in W(\rB\rC_\infty)$ such that $w \bullet \lambda^{\dag}=\lambda^{\dag}$.  We then put $i_{2n}(\lambda)=\infty$ and leave $\tau_{2n}(\lambda)$ undefined. \qedhere
\end{compactitem}
\end{definition}

For a proof that Definition~\ref{defn:modrule-border} and Definition~\ref{defn:modrule-weyl} agree, see \cite[Proposition 3.5]{lwood}. We remark that there is a gap in the proof for showing that both definitions of $i_{2n}(\lambda)$ agree: the first part of the proof constructs an element $w \in W(\rB\rC_\infty)$ as a product of shorter elements and assumes that the length is additive. This is not proven, but follows from \eqref{eqn:BClength} and the following lemma.

\begin{lemma} \label{lem:inc-col}
Use notation as in Definition~\ref{defn:modrule-border}. Choose partitions $\mu$, $\nu$, $\lambda$ with $\nu = \mu \setminus R_\mu$ and $\lambda = \nu \setminus R_\nu$. Then $c(R_\mu) > c(R_\nu)$.
\end{lemma}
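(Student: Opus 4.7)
The plan is to reparametrize the border strips via first-column hook lengths, which makes the claim almost immediate. For a partition $\lambda$ with $\ell(\lambda)>0$, set $h_i(\lambda)=\lambda_i+\ell(\lambda)-i$ for $1\le i\le\ell(\lambda)$; this is the hook length of the box $(i,1)$, and $(h_i(\lambda))_{i=1}^{\ell(\lambda)}$ is strictly decreasing since $\lambda$ is weakly decreasing. My first step would be to verify that a border strip $R$ of length $L$ starting at $(\ell(\lambda),1)$ with $\lambda\setminus R$ a partition is in bijection with indices $a\in[1,\ell(\lambda)]$ satisfying $h_a(\lambda)=L$ (with $a$ the row of the top box of $R$), and that under this bijection $c(R)=\lambda_a$. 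Applied to our setting, $c(R_\mu)=\mu_a$ where $h_a(\mu)=2(\ell(\mu)-n-1)$, and $c(R_\nu)=\nu_{a'}$ where $h_{a'}(\nu)=2(\ell(\nu)-n-1)$.

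Next, I would translate the hook sequence of $\nu$ into that of $\mu$. In the generic case $\mu_a\ge 2$, writing $\nu_i=\mu_i$ for $i<a$ and $\nu_i=\mu_{i+1}-1$ for $a\le i\le\ell(\nu)$, a direct calculation yields
\[
h_i(\nu)=h_{\sigma(i)}(\mu)-d \qquad (1\le i\le\ell(\nu)),
\]
where $d:=\ell(\mu)-\ell(\nu)\ge 1$ and $\sigma(i)=i$ for $i<a$, $\sigma(i)=i+1$ for $a\le i\le\ell(\nu)$. Setting $j=\sigma(a')$, the defining condition of $a'$ becomes $h_j(\mu)=h_a(\mu)-d$, and strict decrease of $(h_i(\mu))$ forces $j>a$; hence $a'\ge a$ and $j=a'+1$, which gives $\nu_{a'}=\mu_{a'+1}-1=\mu_j-1$. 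Since $j>a$ and $\mu$ is weakly decreasing, $\mu_j\le\mu_a$, so $c(R_\nu)=\mu_j-1\le\mu_a-1<\mu_a=c(R_\mu)$.

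In the degenerate subcase $\mu_a=1$ (so $R_\mu$ is a vertical strip and $\nu=(\mu_1,\ldots,\mu_{a-1})$), one has $h_i(\nu)=h_i(\mu)-d$ for $i<a$, and existence of $R_\nu$ would require an index $j<a$ with $h_j(\mu)=h_a(\mu)-d<h_a(\mu)$, contradicting strict decrease; so $R_\nu$ does not exist and there is nothing to prove. An analogous collapse handles the boundary situation $\ell(\nu)<a$, where again $R_\nu$ cannot exist. The main obstacle will be the bookkeeping in the hook-length translation of Step~2, in particular identifying the correct shift $\sigma$ while tracking how trailing rows of $\mu$ equal to $1$ collapse under rim-hook removal; once that is in place, strict monotonicity of $(h_i(\mu))$ forces the conclusion.
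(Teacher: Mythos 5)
Your proof is correct, and it takes a genuinely different (if related) route from the paper's. You reparametrize removable border strips by the first-column hook lengths $h_i(\lambda)=\lambda_i+\ell(\lambda)-i$ (essentially beta-numbers), show that removing $R_\mu$ deletes the entry $h_a(\mu)$ and shifts the remaining entries down by $d=\ell(\mu)-\ell(\nu)$, and then let strict monotonicity of the hook sequence force the hand row of $R_\nu$ to correspond to a row $j=a'+1>a$ of $\mu$, giving $c(R_\nu)=\mu_j-1<\mu_a=c(R_\mu)$. The paper instead argues directly on the diagram: writing $c(R)=|R|-h$ where $h+1$ is the number of rows the strip occupies, it splits into two cases according to whether the last box of $R_\nu$ lies strictly below or weakly above that of $R_\mu$; the first case is handled by containment and the second by the identity $|R_\mu|-|R_\nu|=2(\ell(\mu)-\ell(\nu))$ together with the row comparison. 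Your version costs a little more setup (the bijection between removable strips and hook lengths) but is more systematic: it locates the hand of $R_\nu$ exactly, it makes the paper's first case (which is asserted with minimal justification) automatic, and it cleanly isolates the situations in which $R_\nu$ fails to exist so the statement is vacuous. One small bookkeeping point: the correct dichotomy for your degenerate analysis is $\mu_{a+1}\le 1$, i.e.\ $\ell(\nu)<a$, rather than $\mu_a=1$ (e.g.\ $\mu=(3,1,1)$ with $a=1$ has $\mu_a\ge 2$ but $\ell(\nu)<a$); since you treat the boundary situation $\ell(\nu)<a$ separately and it collapses the same way, nothing is lost.
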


\begin{proof}
Let $h_\mu = |R_\mu| - c(R_\mu)$ and $h_\nu = |R_\nu| - c(R_\nu)$. Then $h_\mu+1$ is the number of rows that $R_\mu$ occupies and similarly for $h_\nu+1$. Since $R_\nu$ sits above $R_\mu$, if the last box in $R_\nu$ is in a row strictly lower than the row of the last box of $R_\mu$, we have $c(R_\mu) > c(R_\nu)$. So suppose that the last box in $R_\nu$ is in a row at least as high as the row of the last box of $R_\mu$, so that $\ell(\mu) - h_\mu \ge \ell(\nu) - h_\nu$, or equivalently, $h_\nu - h_\mu \ge \ell(\nu) - \ell(\mu)$. Then we have
\[
c(R_\mu) - c(R_\nu) = h_\nu - h_\mu + 2(\ell(\mu) - \ell(\nu)) \ge \ell(\mu) - \ell(\nu) > 0. \qedhere
\]
\end{proof}

Let $\lambda$ be a partition that fits into the $n \times k$ rectangle $R_{n,k}$, i.e., $\ell(\lambda) \le n$ and $\lambda_1 \le k$. By looking at the lower hull of the Young diagram of $\lambda$, we can record $\lambda$ as a lattice path from the bottom left corner of $R_{n,k}$ to the top right corner of $R_{n,k}$ that only uses steps in the up direction and the in right direction (see Example~\ref{eg:rectangle}). This gives us a sequence of ups and rights, let $\alpha_1 < \alpha_2 < \cdots < \alpha_n$ be the positions of the ups and let $\beta_1 < \cdots < \beta_k$ be the sequence of the rights. Recall the definition of $\rho$ from \eqref{eqn:rho}. Define $w_\lambda \in W$ by 
\begin{align} \label{eqn:wlambda}
w_\lambda(\rho) = (\beta_k, \dots, \beta_2, \beta_1, \alpha_n, \dots, \alpha_2, \alpha_1).
\end{align}
Then $w_\lambda^{-1} \in W^P$ by \eqref{eqn:WP} and $\ell(w_\lambda) = |\lambda|$ by \eqref{eqn:BClength}. Furthermore, we have
\[
w_\lambda(\rho) - \rho = (-\lambda_k^\dagger, \dots, -\lambda_2^\dagger, -\lambda_1^\dagger, \lambda_1, \lambda_2, \dots, \lambda_n).
\]
To see this, note that $\alpha_i - i = \#\{j \mid \alpha_i > \beta_j\}$, and this is just $\lambda_{n+1-i}$. Similarly, by reversing the situation (i.e., working in a transpose rectangle or just walking backwards in the lattice path) we see that $\beta_i - (n+i) = -\#\{j \mid \alpha_j > \beta_i\}$, and this is just $-\lambda_i^\dagger$.

\begin{example} \label{eg:rectangle}
Let $n=5$ and $k=6$ and consider $\lambda = (5,3,3,1)$. Then $\alpha = (1 < 3 < 6 < 7 < 10)$ and $\beta = (2 < 4 < 5 < 8 < 9 < 11)$:
\[
\ydiagram[*(lightgray)]{5,3,3,1}*[*(white)]{6,6,6,6,6}
\]
so $w_\lambda(\rho) = (11,9,8,5,4,2, 10,7,6,3,1)$ and $w_\lambda(\rho)-\rho = (0,-1,-1,-3,-3,-4,5,3,3,1,0)$.
\end{example}

\begin{lemma} \label{lem:wmu}
Let $\mu$ be a partition with $\mu_1 \le k$ such that $\tau_{2n}(\mu) = \lambda$. There exists $w_\mu \in W$ such that $w_\mu^{-1} \in W^P$ and $\ell(w_\mu) = |\mu| - i_{2n}(\mu)$, and 
\begin{align} \label{eqn:mu}
w_\mu(\rho) - \rho = (-\mu_k^\dagger, \dots, -\mu_2^\dagger, -\mu_1^\dagger, \lambda_1, \lambda_2, \dots ,\lambda_n).
\end{align}
\end{lemma}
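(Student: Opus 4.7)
My approach is to induct on $\ell(\mu)$. In the base case $\ell(\mu) \le n$ one has $\mu = \lambda$ and $i_{2n}(\mu) = 0$, so $w_\mu := w_\lambda$ from \eqref{eqn:wlambda} already meets every requirement by the discussion following that equation. For the inductive step, set $\nu := \mu \setminus R_\mu$, $c := c(R_\mu)$, and $h := |R_\mu| - c$. Since $R_\mu$ starts at the first box of the last row of $\mu$ and $\nu$ must be a partition, one has $\ell(\nu) < \ell(\mu)$, and also $\nu_1 \le \mu_1 \le k$, $\tau_{2n}(\nu) = \lambda$, and $i_{2n}(\mu) = c + i_{2n}(\nu)$. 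The inductive hypothesis supplies $w_\nu \in W$ with the properties of the lemma for $\nu$.

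I then define $w_\mu \in W$ to be the element whose action on $\rho$ agrees with $w_\nu(\rho)$ outside the block $T := \{k-c+1, \dots, k\}$ and which in position $k+1-j$ (for $1 \le j \le c$) replaces the entry $n+j - \nu_j^\dagger$ by $n+j - \mu_j^\dagger$; this matches \eqref{eqn:mu}. The nontrivial verification that $w_\mu$ is a signed permutation of $\rho$ is the multiset equality $\{|n+j-\mu_j^\dagger| : 1 \le j \le c\} = \{|n+j-\nu_j^\dagger| : 1 \le j \le c\}$, which I extract from Definition~\ref{defn:modrule-weyl}: the elements of $W(\rB\rC_\infty)$ realizing $\tau_{2n}(\mu)$ and $\tau_{2n}(\nu)$ force $\{|\mu_j^\dagger - n - j|\}_{j \ge 1} = \{|\nu_j^\dagger - n - j|\}_{j \ge 1}$, and for $j > k$ both shifted sequences equal $\sigma$ while for $c < j \le k$ one has $\mu_j^\dagger = \nu_j^\dagger$, so the equality restricts to the claimed range. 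Strict decrease in the first $k$ positions of $w_\mu(\rho)$ is immediate from $\mu^\dagger$ being a partition, and strict decrease with positivity in the last $n$ positions is immediate from $\lambda$ being a partition, so $w_\mu^{-1} \in W^P$ by \eqref{eqn:WP}.

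The heart of the argument is the length identity $\ell(w_\mu) = |\mu| - i_{2n}(\mu)$, which via $|\mu| - i_{2n}(\mu) = \ell(w_\nu) + h$ reduces to the claim $\ell(w_\mu) - \ell(w_\nu) = h$. Because $w_\mu(\rho)$ and $w_\nu(\rho)$ agree outside $T$ and are both strictly decreasing inside $T$ (as $n+j-\mu_j^\dagger$ is strictly increasing in $j$), each of the three statistics in \eqref{eqn:BClength} changes only through pairs with at least one endpoint in $T$, and with no internal inversion contribution. The net change decomposes into (i) entries of $T$ whose sign flips from $+$ to $-$, contributing to $\Delta\,\mathrm{neg}$; (ii) pairs in $T \times \{k+1, \dots, k+n\}$ where the relative order of the $T$-entry and $b_m := n+1-m+\lambda_m$ reverses, contributing to $\Delta\,\mathrm{inv}$; and (iii) pairs involving $T$ whose pairwise sum crosses zero, contributing to $\Delta\,\mathrm{nsp}$. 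The column-box counts $d_j := \mu_j^\dagger - \nu_j^\dagger$ (with $\sum_j d_j = h + c$) together with the rim-strip identities $\mu_{j+1}^\dagger = \nu_j^\dagger + 1$ for $1 \le j < c$ (encoding that $R_\mu$ is a connected strip with no $2 \times 2$) then provide enough structure for a direct combinatorial accounting that sums the contributions of (i)--(iii) to $\sum_j d_j - c = h$.

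The main obstacle is the bookkeeping in (iii), where sign flips in different columns of $R_\mu$ interact nontrivially through pairwise sums. A conceptually cleaner alternative is to deduce the length increment from the length-additivity of the $W(\rB\rC_\infty)$-decomposition mentioned in the remark following Lemma~\ref{lem:inc-col}: the step $\nu \to \mu$ has $\rB\rC_\infty$-length $c$, modifies only the block $T$ of $w_\nu(\rho)$, and a single-step comparison of the two Coxeter length formulas obtained by applying \eqref{eqn:BClength} to the appropriate sequences should then translate $c$ into $h$ directly, bypassing the pair-by-pair analysis.
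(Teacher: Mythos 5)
Your skeleton is the same as the paper's: induct on the number of border strips removed, take the element $w_\mu$ whose action on $\rho$ realizes \eqref{eqn:mu} (the paper writes it as $t_0t_1\cdots t_{c-1}w_\nu$ with $t_0$ negating position $k$ and $t_i = s_{k-i}$, which cyclically shifts the block and negates one entry), and reduce everything to $\ell(w_\mu)-\ell(w_\nu)=|R_\mu|-c$. That reduction, your verification of \eqref{eqn:WP}, and the identity $\sum_j d_j = h+c$ are all correct. But the lemma's entire content is the length increment, and you do not prove it: you list the three sources of change (i)--(iii) and then assert that the column data ``provide enough structure for a direct combinatorial accounting,'' explicitly naming the $\mathrm{nsp}$ bookkeeping as ``the main obstacle.'' That is a genuine gap, not a routine verification. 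The observation that makes the computation close is one you appear to have missed: as a \emph{multiset}, the first $k$ entries of $w_\mu(\rho)$ differ from those of $w_\nu(\rho)$ only by negating the single value $\gamma_c$ (the other block entries are permuted, not altered in absolute value), and by Lemma~\ref{lem:inc-col} this $\gamma_c$ equals the untouched original entry $\beta_c>0$. Since $\mathrm{neg}$, $\mathrm{nsp}$, and the cross-block part of $\mathrm{inv}$ depend only on multisets, the increment collapses to $1 + (c-1) + 2N_\mu$ with $N_\mu = \#\{i : \alpha_i < \gamma_c\}$, and the border-strip geometry ($N_\mu = n+1-h_\mu$, $|R_\mu| = \ell(\mu)-h_\mu+c$) converts this to $|R_\mu|-c$. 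Your entrywise framing, in which several entries of $T$ may ``flip sign'' and interact through pairwise sums, makes the accounting genuinely hard and it is not clear it would terminate in the stated form.

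Your proposed shortcut is also not available. The length-additivity of the $W(\rB\rC_\infty)$-decomposition that you want to invoke is exactly the unproven step in \cite[Proposition 3.5]{lwood} that the remark preceding Lemma~\ref{lem:inc-col} flags as a gap --- one that the paper fills \emph{using} \eqref{eqn:BClength} and the computation in this very lemma. Appealing to it here would be circular. (A smaller point: your verification that the target sequence is a signed permutation of $\rho$ goes through Definition~\ref{defn:modrule-weyl}; it is cleaner and more self-contained to read it off directly from the border-strip relations $\nu_j^\dagger = \mu_{j+1}^\dagger - 1$ for $j<c$ and $\nu_c^\dagger = 2n+1+c-\mu_1^\dagger$, which show the block is a cyclic shift with one negation, as the paper does.)
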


\begin{proof}
Let $t_0$ be the involution that negates the $k$th position. Also, for $i=1,\dots,k-1$, set $t_i = s_{k-i}$.

We will use the border strip version of the modification rule (Definition~\ref{defn:modrule-border}) in this proof. We will do induction on the number of border strips that we need to remove from $\mu$ in order to get $\lambda$. The base case is $\lambda = \mu$. In this case, we have defined $w_\lambda$ in \eqref{eqn:wlambda} and established its properties.

Now assume that $\ell(\mu) > n$ and set $\nu = \mu \setminus R_\mu$ where $R_\mu$ is a border strip of length $2(\ell(\mu) - n - 1)$ with $c = c(R_\mu)$ columns. By induction, there exists $w_\nu \in W$ with the desired properties. Set 
\[
w_\mu = t_0 t_1 t_2 \cdots t_{c-1} w_\nu.
\]
We have 
\[
w_\nu(\rho) = (\gamma_k, \dots, \gamma_1, \alpha_n, \dots, \alpha_1)
\]
where $\gamma_i -(n+ i) = -\nu^\dagger_i$ by \eqref{eqn:mu}. Then 
\[
w_\mu(\rho) = (\gamma_k, \dots, \gamma_{c+1}, \gamma_{c-1}, \dots, \gamma_1, -\gamma_c, \alpha_n, \dots, \alpha_1),
\]
and \eqref{eqn:mu} holds: by definition of border strip, we have 
\[
\nu_i^\dagger = \begin{cases} 
\mu_{i+1}^\dagger - 1 & \text{if $i<c$},\\
2n+2 + c-1 -\mu_1^\dagger & \text{if $i=c$},\\
\mu_i^\dagger & \text{if $i>c$}.
\end{cases}
\]
(The cases $i \ne c$ are clear, and the case $i=c$ can be deduced from this information and the fact that $|\mu| - |\nu| = 2\mu_1^\dagger - 2n - 2$.) By Lemma~\ref{lem:inc-col}, we have $\gamma_c = \beta_c$. So among the first $k$ entries, we have increased (relative to $w_\nu$) the number of negative entries by $1$, and increased the number of negative pairs by $c-1$. If we ignore $-\gamma_c$, then there are no new inversions, negative entries, or negative pairs. So we have to calculate the number of new inversions and negative pairs among $(-\gamma_c, \alpha_n, \dots, \alpha_1)$. By definition, each of these statistics is exactly $N_\mu = \#\{1 \le i \le n \mid \gamma_c > \alpha_i\}$. 

If $h_\mu$ is the index of the row where the last box of $R_\mu$ is, then $N_\mu = n - h_\mu + 1$. Also, the number of rows that $R_\mu$ occupies is $\ell(\mu) - h_\mu + 1$, so we see that $|R_\mu| = \ell(\mu) - h_\mu + c$ (the number of boxes in a border strip is the number of rows it occupies plus the number of columns it occupies minus $1$). In particular, 
\begin{align*}
\ell(w_\mu) &= \ell(w_\nu) + c + 2N_\mu\\
&= \ell(w_\nu) + c + 2n + 2 - 2h_\mu\\
&= \ell(w_\nu) + c + 2n + 2 + 2|R_\mu| - 2\ell(\mu) - 2c\\
&= \ell(w_\nu) - c + |R_\mu|.
\end{align*}
By induction, we have $\ell(w_\nu) = |\nu| - i_{2n}(\nu)$, and the two identities (that follow by definition) $|R_\mu| = |\mu| - |\nu|$ and $i_{2n}(\nu) + c = i_{2n}(\mu)$ give us that $\ell(w_\mu) = |\mu| - i_{2n}(\mu)$ as desired.
\end{proof}

\begin{lemma} \label{lem:exhaust}
The elements $w_\mu^{-1}$ constructed in Lemma~\ref{lem:wmu} as we range over all $\lambda \subseteq R_{n,k}$ exhaust all elements of $W^P$.
\end{lemma}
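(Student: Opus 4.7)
The plan is to invert the inductive construction of Lemma~\ref{lem:wmu} and thereby exhibit $\Phi\colon \mu \mapsto w_\mu^{-1}$ as a bijection onto $W^P$. Injectivity is immediate: by Lemma~\ref{lem:wmu} the entries of $w_\mu(\rho) - \rho$ read off $\mu^\dagger$ from the first $k$ positions and $\tau_{2n}(\mu)$ from the last $n$, so $\mu$ is determined by $w_\mu$.

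For surjectivity, given $w^{-1} \in W^P$, write $w(\rho) = (a_1, \ldots, a_k, b_1, \ldots, b_n)$ according to \eqref{eqn:WP}. Define candidate data by inverting the formula of Lemma~\ref{lem:wmu}: set $\lambda_i = b_i - (n-i+1)$ and $\mu^\dagger_j = (n+j) - a_{k-j+1}$. Strict decrease of the $a_i$'s and the bound $|a_i| \le n+k$ make $\mu^\dagger$ a weakly decreasing sequence of nonnegative integers of length $k$, so $\mu$ is a partition with $\mu_1 \le k$; similarly $\lambda \subseteq R_{n,k}$. It then remains to show $\tau_{2n}(\mu) = \lambda$, $i_{2n}(\mu) < \infty$, and $w_\mu = w$.

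I would prove this by induction on the number $m$ of negative entries among $a_1, \ldots, a_k$. The base case $m = 0$ is clear: $\{a_i\}$ and $\{b_i\}$ partition $\{1, \ldots, n+k\}$, and identifying them with the lattice-path data $(\beta_j, \alpha_i)$ of \eqref{eqn:wlambda} forces $\mu = \lambda$ and $w = w_\lambda$. For $m \ge 1$ we have $a_k < 0$, and since $|a_i| \ne |a_k|$ for $i < k$, there is a unique $c \in \{1, \ldots, k\}$ with $a_{k-c} > -a_k > a_{k-c+1}$ (boundary conventions $a_0 = +\infty$ when $c=k$ and the right inequality being vacuous when $c=1$). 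Set $w' = t_{c-1} \cdots t_1 t_0 w$, so $w'(\rho) = (a_1, \ldots, a_{k-c}, -a_k, a_{k-c+1}, \ldots, a_{k-1}, b_1, \ldots, b_n)$; this is strictly decreasing in its first $k$ entries, lies in $W^P$ by \eqref{eqn:WP}, and has only $m-1$ negatives, so the induction hypothesis applied to the partition $\nu$ produced from $w'$ by the same formula yields $w_\nu = w'$ and $\tau_{2n}(\nu) = \lambda$.

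The crux is to verify that $\mu = \nu \cup R$ for a valid border strip $R$ of length $2(\ell(\mu) - n - 1)$ with $c(R) = c$ in the sense of Definition~\ref{defn:modrule-border}. A direct comparison of the defining formulas for $\mu^\dagger$ and $\nu^\dagger$ recovers exactly the relations $\nu^\dagger_j = \mu^\dagger_{j+1} - 1$ for $j < c$, $\nu^\dagger_c = 2n + c + 1 - \mu^\dagger_1$, and $\nu^\dagger_j = \mu^\dagger_j$ for $j > c$ that appear in the inductive step of Lemma~\ref{lem:wmu}. The inequality $-a_k > a_{k-c+1}$ forces $\nu^\dagger_c < \mu^\dagger_c$, so $R$ meets column $c$; the telescoping $\nu^\dagger_j = \mu^\dagger_{j+1} - 1$ forces exactly one row of overlap between adjacent columns of $R$, ruling out any $2 \times 2$ block; and summing $\mu^\dagger_j - \nu^\dagger_j$ gives $|R| = 2(\mu^\dagger_1 - n - 1) = 2(\ell(\mu) - n - 1)$. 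Granted this, Definition~\ref{defn:modrule-border} gives $i_{2n}(\mu) = c + i_{2n}(\nu) < \infty$ and $\tau_{2n}(\mu) = \tau_{2n}(\nu) = \lambda$, and Lemma~\ref{lem:wmu} gives $w_\mu = t_0 \cdots t_{c-1} w_\nu = w$ by the involutivity of the $t_i$. This border-strip verification is the main obstacle, but every piece of it is a short substitution into the formulas; no new combinatorial insight is required.
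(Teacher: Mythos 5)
Your argument is correct, but it proves the lemma by a different mechanism than the paper does. The paper's proof is a pure counting argument: for each of the $\binom{n+k}{k}$ partitions $\lambda \subseteq R_{n,k}$ and each subset of $\{1,\dots,k\}$ it produces an element of the form $w_\mu$ with $w_\mu^{-1}\in W^P$, giving $2^k\binom{n+k}{k}$ elements, which by \eqref{eqn:sizeWP} is exactly $|W^P|$ (distinctness of these elements is left implicit, but follows since $w_\mu$ determines $\mu$). You instead prove surjectivity directly: starting from an arbitrary $w^{-1}\in W^P$ you read off $\mu^\dagger$ and $\lambda$ from $w(\rho)-\rho$, and by induction on the number of negative entries among the first $k$ coordinates you peel off one sign at a time, checking that each step undoes exactly one border-strip removal of Definition~\ref{defn:modrule-border}. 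I checked the computations in your ``crux'' paragraph and they go through: with $\mu^\dagger_j=(n+j)-a_{k-j+1}$ and $\nu$ read off from $w'=t_{c-1}\cdots t_1 t_0 w$, the relations $\nu^\dagger_j=\mu^\dagger_{j+1}-1$ for $j<c$, $\nu^\dagger_c=2n+c+1-\mu^\dagger_1$, and $\nu^\dagger_j=\mu^\dagger_j$ for $j>c$ do hold; the identity $\nu^\dagger_j+1=\mu^\dagger_{j+1}$ is precisely the one-row-overlap condition that makes $\mu/\nu$ a connected strip with no $2\times 2$ square; the column lengths telescope to $|R|=2\mu^\dagger_1-2n-2=2(\ell(\mu)-n-1)$; and $\ell(\mu)=n+1-a_k>n+1$, so Definition~\ref{defn:modrule-border} indeed calls for a nonempty strip at this stage. (Two cosmetic points: when $c=1$ the right-hand inequality $-a_k>a_k$ is automatic rather than vacuous, and the nonnegativity of $\mu^\dagger_j$ uses $a_{k-j+1}\le n+j$, which follows from $a_1\le n+k$ together with strict decrease.) What your route buys is independence from the order count \eqref{eqn:sizeWP} and an explicit inverse to $\mu\mapsto w_\mu$, hence also the injectivity you note at the outset; what it costs is length, since the verification essentially re-runs the computation of Lemma~\ref{lem:wmu} in reverse, whereas the paper's counting disposes of the lemma in three lines.
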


\begin{proof}
We have noted in \eqref{eqn:sizeWP} that $|W^P| = 2^k \binom{n+k}{k}$. There are $\binom{n+k}{k}$ choices for $\lambda \subseteq R_{n,k}$. Given $j \in \{1,\dots,k\}$, let $u_j = t_0t_1\cdots t_{j-1}$ using the notation of the proof of Lemma~\ref{lem:wmu}. Given a subset $\{i_1 < i_2 < \cdots i_r \} \subset \{1, \dots, k\}$, the element $u_{i_r} \cdots u_{i_2} u_{i_1} w_\lambda$ is of the form $w_\mu$ for some $\mu$ with $\mu_1 \le k$ and $\tau_{2n}(\mu) = \lambda$, by the reasoning in the proof of Lemma~\ref{lem:wmu}. So we can construct $2^k$ such $w_\mu$ for each choice of $\lambda$. This finishes the proof.

Alternatively, one could proceed by noting that the action of $W$ on the first $k$ entries of a sequence coincides with the action of $W(\rB\rC_\infty)$ on the last $k$ entries of elements of $\cU$ in Definition~\ref{defn:modrule-weyl} and that truncating $\rho$ gives $\sigma$.
\end{proof}

\subsection{The proof}

By \S\ref{sec:parabolic}, we can identify $\fH_V(E)$ with the nilpotent radical of a parabolic subalgebra $\fp \subset \fsp(2n+2k)$. Take $\lambda = 0$ in Theorem~\ref{thm:kostant}. Then we get the formula
\[
\rH_i(\fH_V(E); \bC) \cong \bigoplus_{\substack{w^{-1} \in W^P\\ \ell(w) = i}} L_{\fl}(w(\rho)-\rho)^*.
\]
where the condition $w^{-1} \in W^P$ is defined in \eqref{eqn:WP} and $\rho$ is defined in \eqref{eqn:rho}, and $\fl = \fgl(E) \times \fsp(V)$.

By Lemma~\ref{lem:exhaust}, the sequences of the form $w(\rho) - \rho$ for $w^{-1} \in W^P$ are all sequences of the form
\[
(-\mu_k^\dagger, \dots, -\mu_1^\dagger, \lambda_1, \dots, \lambda_n)
\]
where $\lambda$ is a partition and $\mu$ is a partition such that $\tau_{2n}(\mu) = \lambda$ and furthermore, $\ell(w) = |\mu| - i_{2n}(\mu)$. The corresponding representation $L_{\fl}(w(\rho)-\rho)^*$ of $\fl$ is $\bS_{\mu^\dagger}(E) \otimes \bS_{[\lambda]}(V)$. This finishes the proof.

\section{Proof of Theorem~\ref{thm:main} via stable representation theory} \label{sec:proof2}

Define $\bV = \bigcup_{n \ge 1} V_{2n}$ where $\dim(V_{2n}) = 2n$, each $V_n$ is equipped with a symplectic form, and we have compatible embeddings $V_{2n} \subset V_{2n+2}$. Then $\bV$ carries an action of the group $\Sp(\infty) = \bigcup_{n \ge 1} \Sp(2n)$, and we can define a category $\Rep(\Sp)$ which consists of subquotients of finite direct sum of tensor spaces $\bV^{\otimes m}$. The simple objects of this category are indexed by partitions $\lambda$ of all size, and we denote them by $\bS_{[\lambda]}(\bV)$. See \cite[\S 4.1]{infrank} for details. 

Similarly, we can define an orthogonal analogue of this category $\Rep(\bO)$. To distinguish notation, we denote the basic representation by $\bW$ and its simple objects are denoted $\bS_{[\lambda]}(\bW)$. Then $\Rep(\Sp)$ and $\Rep(\bO)$ are symmetric monoidal $\bC$-linear categories and there are nonzero maps $\bomega_\bV \colon \bigwedge^2 \bV \to \bC$ and $\bomega_\bW \colon \Sym^2(\bW) \to \bC$. Furthermore, there is an {\it anti-}symmetric monoidal equivalence of categories 
\[
\dagger \colon \Rep(\Sp) \to \Rep(\bO)
\]
\cite[Theorem 4.3.4]{infrank} which has the property $\bS_{[\lambda]}(\bV)^\dagger \cong \bS_{[\lambda^\dagger]}(\bW)$.

Let $E$ be a finite-dimensional vector space. Then we can define a Lie algebra object 
\[
\fH = \fH_\bV(E) = (E \otimes \bV) \oplus \Sym^2(E)
\]
in $\Rep(\Sp)$ using $\bomega_\bV$. Let $\rU(\fH)$ be its universal enveloping algebra. (By our finiteness conditions, $\rU(\fH)$ does not belong to $\Rep(\Sp)$, but it is graded-finite and belongs to a suitable enlargement of $\Rep(\Sp)$ where all of the results we use still apply.) Then $\rU(\fH)^\dagger$ is the universal enveloping algebra of a Lie superalgebra $\fH^\dagger = (E \otimes \bW) \oplus \Sym^2(E)$, which is an object of $\Rep(\bO)$.

Given a Lie algebra object $\fg$ in a symmetric monoidal $\bC$-linear category, we will let $\bK(\fg)_\bullet$ denote its Chevalley--Eilenberg complex \cite[\S 7.7]{weibel}.

\begin{proposition} \label{prop:stable-superhom}
$\Tor_i^{\rU(\fH)^\dagger}(\bC, \bC) = \bigoplus_{|\lambda|=i} \bS_\lambda(E) \otimes \bS_{[\lambda]}(\bW)$.
\end{proposition}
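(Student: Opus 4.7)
The plan is to compute the super-Chevalley--Eilenberg complex $\bK(\fH^\dagger)_\bullet$ directly in the stable category $\Rep(\bO)$, combining an invariant-theoretic factorization with the algebraic Poincar\'e lemma. Since $\fH^\dagger$ has even part $\Sym^2 E$ and odd part $E \otimes \bW$, its super-exterior algebra is
\[
\bK(\fH^\dagger)_n \;=\; \bigoplus_{p+q=n} \Sym^p(E \otimes \bW) \otimes \Lambda^q(\Sym^2 E),
\]
with differential induced by the bracket $\phi \colon \Sym^2(E \otimes \bW) \to \Sym^2 E$ built from $\bomega_\bW$, so that $\Tor_i^{\rU(\fH)^\dagger}(\bC,\bC) = \rH_i(\bK(\fH^\dagger))$.

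The first step is an orthogonal analogue of Cauchy's identity in $\Rep(\bO)$: setting $A = \Sym(\Sym^2 E)$ and $H = \bigoplus_\lambda \bS_\lambda(E) \otimes \bS_{[\lambda]}(\bW)$, I would establish an $A$-module isomorphism
\[
\Sym(E \otimes \bW) \;\cong\; A \otimes H
\]
by combining Cauchy $\Sym(E \otimes \bW) = \bigoplus_\mu \bS_\mu(E) \otimes \bS_\mu(\bW)$, the stable $\bO$-branching rule decomposing $\bS_\mu(\bW) = \bigoplus_\lambda (\sum_{\delta \in \cD_e} c^\mu_{\lambda \delta}) \bS_{[\lambda]}(\bW)$ (where $\cD_e$ is the set of partitions with even rows), and Littlewood's plethysm $A = \bigoplus_{\delta \in \cD_e} \bS_\delta(E)$. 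Substituting factors the CE complex as
\[
\bK(\fH^\dagger)_\bullet \;\cong\; H \otimes \bigl(A \otimes \Lambda^\bullet(\Sym^2 E)\bigr),
\]
naturally trigraded by $(|\lambda|, d, q)$ with total degree $n = |\lambda| + 2d + q$.

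The technical heart, and the principal obstacle, is to identify the CE differential as $\mathrm{id}_H \otimes d$, where $d$ is the algebraic de Rham differential on $A \otimes \Lambda^\bullet(\Sym^2 E) = \Omega^\bullet_{\Spec A/\bC}$---the derivation sending each generator of $A_1 = \Sym^2 E$ to the corresponding element of $\Lambda^1(\Sym^2 E)$. On the quadratic piece this is immediate from Schur's lemma in $\Rep(\bO)$: $\phi$ lands in the trivial $\bO$-isotype, and the only equivariant map $\Sym^2(E \otimes \bW) = A_1 \oplus H_2 \to A_1$ is a scalar projection onto the first summand. The Leibniz rule together with $\bO$-equivariance extends this identification to all higher symmetric powers, with the stable machinery of \cite{infrank} guaranteeing that the harmonic decomposition is a canonical isomorphism of objects in $\Rep(\bO)$ rather than an ad hoc choice of splitting. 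Granted this, the algebraic Poincar\'e lemma gives that $\Omega^\bullet_{\Spec A/\bC}$ is acyclic outside bidegree $(d,q) = (0,0)$, where its cohomology is $\bC$; tracking the surviving $H$-grading then yields
\[
\rH_i(\bK(\fH^\dagger)) \;=\; H_i \;=\; \bigoplus_{|\lambda|=i} \bS_\lambda(E) \otimes \bS_{[\lambda]}(\bW),
\]
which is the claimed formula.
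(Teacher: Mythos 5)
Your overall strategy---recognize the reduced Chevalley--Eilenberg complex as $(\text{harmonics}) \otimes (\text{Koszul complex of } A=\Sym(\Sym^2E))$ and kill the second factor---is essentially an unpacking of the argument the paper actually runs (Koszul duality for the complete intersection $\Sym(E\otimes W_n)/(\Sym^2E)$). But as written it has a genuine gap: the direct-sum decompositions you assert do not hold in $\Rep(\bO)$, because that category is not semisimple. Already in degree $2$, the map $\bomega_\bW\colon\Sym^2(\bW)\to\bC$ is surjective but non-split (there is no $\bO(\infty)$-invariant vector in $\Sym^2\bW=\bigcup_n\Sym^2 W_n$), so $\Sym^2(E\otimes\bW)$ is a non-split extension having $A_1=\Sym^2(E)$ as a \emph{quotient}, not a summand; the splitting $\Sym^2(E\otimes\bW)=A_1\oplus H_2$ on which your Schur's-lemma step rests is false. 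More globally, $\Sym(E\otimes\bW)$ is an injective object of $\Rep(\bO)$, while $A\otimes\bigoplus_\lambda\bS_\lambda(E)\otimes\bS_{[\lambda]}(\bW)$ is a direct sum of (mostly non-injective) simples, so the two cannot be isomorphic---your ``orthogonal Cauchy identity'' holds only in the Grothendieck group, which is not enough to transport a differential. The repair is exactly the first step of the paper's proof: since homology commutes with the union $\bW=\bigcup_n W_n$, specialize to finite-dimensional $W_n$ with $n\ge 2\dim E$, where everything is semisimple and the separation-of-variables isomorphism $\Sym(E\otimes W_n)\cong A\otimes H$ is a theorem, equivalent to the regularity of the quadrics $\Sym^2(E)$, i.e., to the complete-intersection statement of \cite[Theorems 2.5, 2.7]{exceptional} that the paper cites.

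Even after that fix, the identification of the differential with $\mathrm{id}_H\otimes d$ is where the real content sits, and ``Leibniz plus equivariance'' does not deliver it: the chain-level CE differential is a coderivation of $\bigwedge^\bullet\fH^\dagger$, not a derivation (it is nonzero on a product of two cycles), and $\bO(W_n)$-equivariance only says the differential preserves each $\bS_{[\lambda]}(W_n)$-isotypic component; it says nothing about the induced maps on the multiplicity spaces $A_d\otimes\bS_\lambda(E)\otimes\bigwedge^q(\Sym^2E)\to A_{d-1}\otimes\bS_\lambda(E)\otimes\bigwedge^{q+1}(\Sym^2E)$, which are not irreducible and hence not controlled by Schur's lemma. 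The clean finish is to dualize: the CE \emph{cochain} differential is a derivation, and the cochain complex $\Sym((E\otimes W_n)^*)\otimes\bigwedge^\bullet((\Sym^2E)^*)$ is literally the Koszul complex on the quadrics, i.e., $\Sym((E\otimes W_n)^*)\otimes_{A^\vee}K_\bullet$ with $K_\bullet$ the Koszul resolution of $\bC$ over the polynomial ring on the quadrics; its homology is the corresponding $\Tor$, which by freeness is the harmonics in degree $0$. At that point you have reproved the Koszul-duality input the paper simply cites from \cite[Example 10.2.3]{avramov}, so the two arguments coincide in substance.
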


\begin{proof}
Write $\bW = \bigcup_{n \ge 1} W_n$ where $\dim(W_n) = n$, each $W_n$ is an orthogonal space, and we have compatible embeddings $W_n \subset W_{n+1}$. The Tor in question can be calculated as the homology of the complex $\bK(\fH^\dagger)_\bullet \otimes_{\rU(\fH)^\dagger} \bC$. The definition of $\bK(\fH^\dagger)$ is compatible with the sequence $(W_n)_{n \ge 1}$, so it is enough to fix $E$, and calculate the homology of $\bK(\fH_{W_n}(E)^\dagger)_\bullet \otimes \bC$ for $n \gg 0$.

We take $n \ge 2\dim(E)$. In this case, we claim that $\rU(\fH_{W_n}(E)^\dagger)$ is a Koszul algebra (using the natural grading $\deg(E \otimes W_n) = 1$ and $\deg(\Sym^2(E)) = 2$). Consider the polynomial algebra $A = \Sym(E \otimes W_n)$. Then we have quadratic polynomials 
\[
\Sym^2(E) \subset \Sym^2(E) \otimes \Sym^2(W_n) \subset \Sym^2(E \otimes W_n),
\]
and the ideal generated by them is a complete intersection (see for example \cite[Theorems 3.5, 3.8]{exceptional}). Let $B = A / \Sym^2(E)$ be the quotient. Then $B$ is a Koszul algebra, and its Koszul dual is $\rU(\fH_{W_n^*}(E^*)^\dagger)$ (see \cite[Example 10.2.3]{avramov}), and in particular, $\Tor_i^B(\bC, \bC) = \rU(\fH_{W_n}(E)^\dagger)_i$. Finally, the degree $i$ piece of $B$ has the decomposition $B_i = \bigoplus_{|\lambda|=i} \bS_\lambda(E) \otimes \bS_{[\lambda]}(W_n)$ \cite[Theorems 3.5, 3.8]{exceptional}. Since $\bS_{[\lambda]}(\bW) = \bigcup_n \bS_{[\lambda]}(W_n)$ \cite[(4.1.3)]{infrank}, we are done.
\end{proof}

\begin{corollary}
$\rH_i(\fH; \bC) = \Tor_i^{\rU(\fH)}(\bC, \bC) = \bigoplus_{|\lambda|=i} \bS_{\lambda^\dagger}(E) \otimes \bS_{[\lambda]}(\bV)$.  
\end{corollary}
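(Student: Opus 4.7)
The plan is to deduce the formula from Proposition~\ref{prop:stable-superhom} by transport through the anti-symmetric monoidal equivalence $\dagger \colon \Rep(\Sp) \to \Rep(\bO)$, followed by a reindexing of the sum by partition conjugation.

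First I would observe that $\dagger$ sends the Chevalley--Eilenberg complex of $\fH$ to that of $\fH^\dagger$. The Lie bracket $\bigwedge^2 \fH \to \fH$ in $\Rep(\Sp)$ is transported by the anti-symmetric monoidal structure of $\dagger$ to a morphism $\Sym^2 \fH^\dagger \to \fH^\dagger$ in $\Rep(\bO)$, which is precisely the bracket of the (purely odd) Lie superalgebra $\fH^\dagger$. More generally $\dagger(\bigwedge^n \fH) = \Sym^n(\fH^\dagger)$ and the CE differentials correspond, so $\dagger$ carries $\bK(\fH)_\bullet$ to $\bK(\fH^\dagger)_\bullet$. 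Since $\dagger$ is an equivalence of categories it preserves homology, yielding
$$\rH_i(\fH)^\dagger \cong \rH_i(\fH^\dagger) = \Tor_i^{\rU(\fH)^\dagger}(\bC,\bC).$$

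Next, Proposition~\ref{prop:stable-superhom} identifies the right-hand side as $\bigoplus_{|\lambda|=i} \bS_\lambda(E) \otimes \bS_{[\lambda]}(\bW)$. Applying $\dagger^{-1}$ to both sides and using that it is $\bC$-linear, so commutes with tensoring by the fixed vector space $\bS_\lambda(E)$ (retaining its external $\GL(E)$-action), while sending $\bS_{[\lambda]}(\bW)$ to $\bS_{[\lambda^\dagger]}(\bV)$ by the property of $\dagger$ stated from \cite[Theorem 4.3.4]{infrank}, I obtain
$$\rH_i(\fH) \cong \bigoplus_{|\lambda|=i} \bS_\lambda(E) \otimes \bS_{[\lambda^\dagger]}(\bV).$$
The claimed formula then follows by reindexing via the size-preserving involution $\lambda \leftrightarrow \lambda^\dagger$ on partitions of size $i$.

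The main obstacle is the first step: carefully justifying that $\dagger$ intertwines the two Chevalley--Eilenberg complexes. This reduces to unpacking the anti-symmetric monoidal structure, whose sign twist on the symmetry isomorphism is exactly what converts antisymmetry of a Lie algebra bracket into graded symmetry of the bracket of a purely odd Lie superalgebra. Once one accepts this (which is essentially the defining property established in \cite{infrank}), the remainder of the argument is a formal application of the equivalence and of partition duality.
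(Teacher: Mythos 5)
Your proposal is correct and follows essentially the same route as the paper: the first equality is the standard identification of Lie algebra homology with $\Tor$ over the enveloping algebra, and the second is obtained by transporting Proposition~\ref{prop:stable-superhom} through the equivalence $\dagger$ and reindexing $\lambda \mapsto \lambda^\dagger$. The extra detail you supply about $\dagger$ intertwining the Chevalley--Eilenberg complexes (exchanging $\bigwedge^n$ and $\Sym^n$) is exactly the content the paper leaves implicit in the phrase ``applying $\dagger$.''
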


\begin{proof}
The first equality is standard. The second equality comes from applying $\dagger$ to Proposition~\ref{prop:stable-superhom} and then reindexing $\lambda \mapsto \lambda^\dagger$.
\end{proof}

For each $n \ge 1$, we have specialization functors 
\[
\Gamma_{2n} \colon \Rep(\Sp) \to \Rep(\Sp(2n))
\]
which are left-exact, preserves the symmetric monoidal structure, and sends $\bV$ to $V_{2n}$ \cite[(4.4.4)]{infrank}.

\begin{proposition} \label{prop:hypercoh}
The hypercohomology $\rR^\bullet\Gamma_{2n}(\bK(\fH_\bV(E))_\bullet)$ calculates the Lie algebra homology of $\fH_{V_{2n}}(E)$, i.e., $\rR^i\Gamma_{2n}(\bK(\fH_\bV(E))_\bullet) = \rH_{-i}(\fH_{V_{2n}}(E); \bC)$.
\end{proposition}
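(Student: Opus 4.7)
The plan is to show that $\rR\Gamma_{2n}$ applied to the Chevalley--Eilenberg complex collapses to a termwise computation via $\Gamma_{2n}$, and that this termwise computation is precisely the Chevalley--Eilenberg complex of the specialized Lie algebra. Lie algebra homology is then the homology of this latter complex.

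First, by symmetric monoidality of $\Gamma_{2n}$ together with $\Gamma_{2n}(\bV) = V_{2n}$, and the fact that $E$ and $\Sym^2(E)$ lie in the trivial component of $\Rep(\Sp)$ (so $\Gamma_{2n}$ acts on them as the identity), applying $\Gamma_{2n}$ termwise to $\bK(\fH_\bV(E))_\bullet$ yields $\bK(\fH_{V_{2n}}(E))_\bullet$: the underlying graded objects match because $\bigwedge^p \fH_\bV(E)$ maps to $\bigwedge^p \fH_{V_{2n}}(E)$, and the differentials match because they are built from the Lie bracket, which is itself built from $\bomega_\bV$, and $\Gamma_{2n}(\bomega_\bV) = \omega_{V_{2n}}$.

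Next, I would verify that each $\bK_p(\fH_\bV(E))$ is $\Gamma_{2n}$-acyclic, so that the termwise computation actually computes $\rR\Gamma_{2n}$. Decomposing
\[
\bigwedge^p\bigl((E \otimes \bV) \oplus \Sym^2(E)\bigr) = \bigoplus_{a+b=p} \bigwedge^a(E \otimes \bV) \otimes \bigwedge^b \Sym^2(E),
\]
each summand has the form $T \otimes N$ where $T$ is an $\Sp$-trivial vector space built from $E$ and $\Sym^2(E)$, and $N$ is a direct summand of the tensor power $\bV^{\otimes a}$ (via the Cauchy-type decomposition of $\bigwedge^a(E \otimes \bV)$). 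By the results of \cite{infrank}, tensor powers of $\bV$ are $\Gamma_{2n}$-acyclic (they are injective objects of the relevant enlargement of $\Rep(\Sp)$ on which $\Gamma_{2n}$ is defined), and $\Gamma_{2n}$ commutes with tensoring by a trivial $\Sp$-object, so each $\bK_p$ is $\Gamma_{2n}$-acyclic.

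With acyclicity of the terms established, the hypercohomology spectral sequence degenerates at $E_1$, so
\[
\rR^i\Gamma_{2n}(\bK(\fH_\bV(E))_\bullet) = H^i\bigl(\Gamma_{2n}(\bK(\fH_\bV(E))_\bullet)\bigr) = H^i\bigl(\bK(\fH_{V_{2n}}(E))_\bullet\bigr) = \rH_{-i}(\fH_{V_{2n}}(E); \bC),
\]
where the sign reflects viewing the chain complex $\bK_\bullet$ as sitting in non-positive cohomological degrees. The main obstacle is pinpointing the precise acyclicity statement in \cite{infrank} that guarantees tensor powers of $\bV$ have no higher derived specializations; once that input is secured, the monoidality step and the spectral sequence collapse are formal.
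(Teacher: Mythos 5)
Your proposal is correct and takes essentially the same approach as the paper: both identify $\Gamma_{2n}(\bK(\fH_\bV(E))_\bullet)$ with $\bK(\fH_{V_{2n}}(E))_\bullet$ via symmetric monoidality, and both establish that each term is $\Gamma_{2n}$-acyclic because it decomposes into Schur functors of $\bV$ (equivalently, direct summands of tensor powers of $\bV$), which are injective by \cite[(4.2.9)]{infrank} --- the precise reference you were looking for. The only cosmetic difference is that the paper's $\bK_i$ carries an extra tensor factor $\rU(\fH_\bV(E)) = \Sym(\fH_\bV(E))$, which decomposes into Schur functors of $\bV$ in the same way and so does not affect the argument.
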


\begin{proof}
By definition, the hypercohomology $\rR^\bullet \Gamma_{2n}(\bK(\fH_\bV(E))_\bullet)$ is calculated by finding an injective resolution of $\bK(\fH_\bV(E))_\bullet$, applying $\Gamma_{2n}$ to it, and then taking the homology of the resulting double complex. We have 
\[
\bK(\fH_\bV(E))_i = \bigwedge^i((E \otimes \bV) \oplus \Sym^2(E)) \otimes \rU(\fH_\bV(E))
\] 
and as an object of $\Rep(\Sp)$, the exterior power decomposes as a direct sum of Schur functors on $\bV$. Since $\rU(\fH_\bV(E)) = \Sym(\fH_\bV(E))$ as an object of $\Rep(\Sp)$, the same is true for $\rU(\fH_\bV(E))$. In particular, $\bK(\fH_\bV(E))_i$ is an injective object in $\Rep(\Sp)$ (see \cite[(4.2.9)]{infrank} for the statement for $\Rep(\bO)$, but both cases are proved in the same way). Since $\Gamma_{2n}$ is a symmetric monoidal functor, it takes Schur functors on $\bV$ to Schur functors on $V_{2n}$, so  $\Gamma_{2n}(\bK(\fH_{\bV}(E))_\bullet) = \bK(\fH_{V_{2n}}(E))_\bullet$, and we are done. (The minus sign comes from our homological indexing convention.)
\end{proof}

\begin{proof}[Proof of Theorem~\ref{thm:main}]
By \cite[\S 5.7.9]{weibel}, we have a hypercohomology spectral sequence
\[
\rE_2^{p,-q} = (\rR^p \Gamma_{2n})(\rH_q(\bK(\fH_\bV(E))_\bullet)) \Rightarrow \rR^{p-q} \Gamma_{2n}(\bK(\fH_\bV(E))_\bullet).
\]
By Proposition~\ref{prop:hypercoh}, the right hand side is $\rH_{q-p}(\fH_{V_{2n}}(E); \bC)$. Using \cite[Proposition 1.2, Theorem 3.6]{lwood}, we get
\[
\rR^p \Gamma_{2n}(\bS_{\lambda^\dagger}(E) \otimes \bS_{[\lambda]}(\bV)) = \begin{cases} \bS_{\lambda^\dagger}(E) \otimes \bS_{[\tau_{2n}(\lambda)]}(V_{2n}) & \text{if $p = i_{2n}(\lambda)$} \\ 0 & \text{else} \end{cases},
\]
which lets us calculate $\rE_2^{p,q}$. The differentials in the spectral sequence respect the action of $\GL(E)$, and no two terms have the same representation $\bS_{\lambda^\dagger}(E)$ appearing. So by Schur's lemma, all of the differentials in the $\rE_2$ page are $0$. Hence $\rE_2 = \rE_\infty$, and we get the desired result.
\end{proof}

\section{Complements} \label{sec:complements}

The two methods of calculation above can be used to calculate the Lie algebra homology of two more families of nilpotent Lie algebras, one associated with orthogonal Lie algebras and the other associated with general linear Lie algebras. The proof in \S\ref{sec:proof2} works with very few changes, so we will not repeat the arguments. We will state the relevant differences (also for the approach in \S\ref{sec:proof1}).

\subsection{Orthogonal version}

Let $V$ be an orthogonal space with orthogonal form $\omega_V$ and let $E$ be a vector space. We define a nilpotent Lie algebra
\[
\fI = \fI_V(E) = (E \otimes V) \oplus \bigwedge^2(E)
\]
with Lie bracket on pure tensors given by
\[
[(e \otimes v, x), (e' \otimes v', x')] = (0, \omega_V(e,e') v \wedge v')
\]
where $e, e' \in E$, $v, v' \in V$, and $x,x' \in \bigwedge^2(E)$. The Lie bracket is equivariant for the natural action of $\GL(E) \times \bO(V)$ on $\fI_V(E)$.

We can parametrize irreducible representations of $\bO(V)$ by partitions as in the symplectic case (see \cite[\S 4.1]{lwood} or \cite[\S 19.5]{fultonharris}). There is also a modification rule (see \cite[\S 4.4]{lwood}) and the analogue of Theorem~\ref{thm:main} holds without change. Set $m = \dim(V)$.

\begin{theorem} 
We have an isomorphism of $\GL(E) \times \bO(V)$-modules
\[
\rH_i(\fI_V(E); \bC) = \bigoplus_{\substack{\lambda\\ |\lambda| - i_{m}(\lambda) = i}} \bS_{\lambda^\dagger}(E) \otimes \bS_{[\tau_{m}(\lambda)]}(V).
\]
\end{theorem}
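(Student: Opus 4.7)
The proof parallels \S\ref{sec:proof2} with the roles of symmetric and alternating tensors exchanged. Define the stable Lie algebra $\fI_\bW(E) = (E \otimes \bW) \oplus \bigwedge^2(E)$ as an object in $\Rep(\bO)$, with bracket built from $\bomega_\bW$. Applying $\dagger \colon \Rep(\bO) \to \Rep(\Sp)$ yields a Lie superalgebra $\fI^\dagger = (E \otimes \bV) \oplus \bigwedge^2(E)$ in $\Rep(\Sp)$, in which $\bigwedge^2(E)$ is central and the odd-odd bracket $\Sym^2(E \otimes \bV) \to \bigwedge^2(E)$ factors through $\bigwedge^2(E) \otimes \bigwedge^2(\bV) \subset \Sym^2(E \otimes \bV)$ via $\bomega_\bV$.

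The analog of Proposition~\ref{prop:stable-superhom} then reads
\[
\Tor_i^{\rU(\fI)^\dagger}(\bC, \bC) = \bigoplus_{|\lambda|=i} \bS_\lambda(E) \otimes \bS_{[\lambda]}(\bV).
\]
To prove this, fix $n \ge 2 \dim(E)$ and verify that the finite-dimensional super enveloping algebra corresponding to $\fI^\dagger$ over $V_n$ is Koszul, with Koszul dual the complete intersection
\[
B = \Sym(E \otimes V_n)/\bigwedge^2(E),
\]
where the quadratic ideal is the image of $\bigwedge^2(E) \hookrightarrow \bigwedge^2(E) \otimes \bigwedge^2(V_n) \hookrightarrow \Sym^2(E \otimes V_n)$ under the dual of $\bomega_{V_n}$. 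Geometrically, $\Spec(B)$ is the variety of linear maps $V_n^* \to E$ isotropic for the symplectic form. The symplectic counterpart of \cite[Theorems 2.5, 2.7]{exceptional} gives
\[
B_i = \bigoplus_{|\lambda|=i} \bS_\lambda(E) \otimes \bS_{[\lambda]}(V_n)
\]
in the stable range, and Koszul duality then delivers the claimed formula. Applying $\dagger$ and reindexing $\lambda \mapsto \lambda^\dagger$ yields the stable homology
\[
\rH_i(\fI_\bW(E); \bC) = \bigoplus_{|\lambda|=i} \bS_{\lambda^\dagger}(E) \otimes \bS_{[\lambda]}(\bW).
\]

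To descend to finite-dimensional orthogonal $V$ of dimension $m$, use the orthogonal specialization functor $\Gamma_m \colon \Rep(\bO) \to \Rep(\bO(V))$. The argument of Proposition~\ref{prop:hypercoh} transcribes verbatim, producing a hypercohomology spectral sequence
\[
\rE_2^{p,-q} = \rR^p \Gamma_m(\rH_q(\bK(\fI_\bW(E))_\bullet)) \Rightarrow \rH_{q-p}(\fI_V(E); \bC).
\]
By the orthogonal analog of \cite[Proposition 1.2, Theorem 3.6]{lwood}, together with the orthogonal modification rule from \cite[\S 4.4]{lwood},
\[
\rR^p \Gamma_m(\bS_{\lambda^\dagger}(E) \otimes \bS_{[\lambda]}(\bW)) = \begin{cases} \bS_{\lambda^\dagger}(E) \otimes \bS_{[\tau_m(\lambda)]}(V) & \text{if } p = i_m(\lambda), \\ 0 & \text{otherwise.} \end{cases}
\]
Since distinct $\lambda$'s contribute non-isomorphic $\GL(E)$-factors, Schur's lemma forces all differentials on the $\rE_2$-page to vanish, and the theorem follows after collecting the bidegrees satisfying $q - p = i$.

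The only genuinely new checks are the Koszul property of the finite super enveloping algebra and the Cauchy-type decomposition of $B = \Sym(E \otimes V_n)/\bigwedge^2(E)$. These are the symplectic counterparts of the orthogonal inputs used in \S\ref{sec:proof2}: the complete intersection argument of \cite[Example 10.2.3]{avramov} transports formally, and the branching identity of \cite[Theorems 2.5, 2.7]{exceptional} has its standard symplectic variant. This modest verification is where the only real work lies; everything else is categorical bookkeeping identical to the symplectic case.
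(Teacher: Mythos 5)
Your proposal is correct and follows essentially the same route as the paper: the paper's own proof of this theorem is exactly the observation that the argument of \S\ref{sec:proof2} goes through with $\Rep(\bO)$ and $\Rep(\Sp)$ interchanged and with $B$ replaced by the complete intersection $\Sym(E\otimes W)/\bigwedge^2(E)$ for $W$ symplectic, citing \cite[Theorem 2.1]{exceptional} for the complete intersection property and the equivariant decomposition (which is the ``symplectic counterpart'' you invoke). Your write-up merely makes explicit the steps the paper leaves implicit, so there is nothing substantive to add or correct.
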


The calculation presented in \S\ref{sec:proof2} goes through with little change. The roles of $\Rep(\bO)$ and $\Rep(\Sp)$ are reversed, and the algebra $B$ in the proof of Proposition~\ref{prop:stable-superhom} is replaced by $\Sym(E \otimes W_{2n}) / \bigwedge^2(E)$ (where now $W_{2n}$ is a symplectic vector space). The fact that it is a complete intersection and its $\GL(E) \times \Sp(W)$-equivariant decomposition can be found in \cite[Theorem 3.1]{exceptional}. The specialization functors 
\[
\Gamma_m \colon \Rep(\bO) \to \Rep(\bO(m))
\]
\cite[(4.4.4)]{infrank} behave as in the symplectic case.

Alternatively, we can realize $\fI_V(E)$ as the nilpotent radical of the parabolic subalgebra of $\fso(U)$, obtained by marking the $k$th node of the Dynkin diagram ($k = \dim E$), and where $U = V \oplus E \oplus E^*$ with the orthogonal form 
\[
\omega_U((v,e,\phi), (v',e',\phi')) = \omega_V(v,v') + \phi'(e) + \phi(e'). 
\]
We state the necessary facts so that the reader can carry out the calculation in \S\ref{sec:kostant} if desired.

Choose $n$ so that $\dim(V) = 2n+1$ or $\dim(V) = 2n$. We use the subscripts ``odd'' and ``even'' to distinguish these two cases. A weight of $\fso(U)$ is a sequence $\lambda \in \bC^{n+k}$ and it is a dominant integral weight precisely when $\lambda_1 \ge \cdots \ge \lambda_{n+k-1} \ge |\lambda_{n+k}|$ (if $\dim(V) = 2n+1$, we have the additional restriction that $\lambda_{n+k} \ge 0$) and either $\lambda_i \in \bZ_{\ge 0}$ for all $i$, or $\lambda_i \in \frac{1}{2} + \bZ_{\ge 0}$ for all $i$. We have
\begin{align*}
\rho_{\rm odd} &= \frac{1}{2} (2n+2k-1, 2n+2k-3, \dots, 1),\\
\rho_{\rm even} &= (n+k-1, n+k-2, \dots, 1, 0).
\end{align*}

When $\dim(V) = 2n+1$, the Weyl group is the same as in the symplectic case. When $\dim(V) = 2n$, the Weyl group $W_{\rm even}$ of $\fso(U)$ is the group of signed permutations which acts on sequences of length $n+k$ with the restriction that the number of negative signs that appear is even. The simple reflections that generate $W_{\rm even}$ are as follows: for $i=1, \dots, n+k-1$, $s_i$ is the transposition that switches positions $i$ and $i+1$, and $s_{n+k}$ negates the last two positions and switches them. Given $w \in W_{\rm even}$, we can think of $w$ as a usual signed permutation so the statistics in \S\ref{sec:weylgroup} are defined. Then the length function on $W_{\rm even}$ is 
\[
\ell_{\rm even}(w) = {\rm inv}(w) + {\rm nsp}(w)
\]
\cite[Proposition 8.2.1]{bjornerbrenti}. Finally, for $w \in W_{\rm even}$, we have $w^{-1} \in W_{\rm even}^P$ if and only if 
\begin{align*}
&w(\rho_{\rm even})_1 > w(\rho_{\rm even})_2 > \cdots > w(\rho_{\rm even})_k \quad \text{ and }\\
&w(\rho_{\rm even})_{k+1} > \cdots > w(\rho_{\rm even})_{k+n-1} > |w(\rho_{\rm even})_{k+n}|.
\end{align*}

\subsection{General linear version} \label{sec:gl-version}

Let $V, E, F$ be vector spaces. We define a nilpotent Lie algebra
\[
\fG = \fG_V(E,F) = ((E \otimes V) \oplus (V^* \otimes F)) \oplus (E \otimes F)
\]
with Lie bracket on pure tensors given by
\[
[((e \otimes v, \phi \otimes f), x \otimes y), ((e' \otimes v', \phi' \otimes f'), x' \otimes y')] = (0, \phi'(v) e \otimes f' - \phi(v') e' \otimes f)
\]
where $e, e', x, x' \in E$, $v, v' \in V$, $\phi, \phi' \in V^*$, and $f,f',y,y' \in F$. The Lie bracket is equivariant for the natural action of $\GL(E) \times \GL(V) \times \GL(F)$ on $\fG_V(E,F)$.

We can parametrize irreducible (rational) representations of $\GL(V)$ by pairs of partitions $(\lambda,\mu)$ (see \cite[\S 5.1]{lwood}). There is also a modification rule (see \cite[\S 5.4]{lwood}) and the analogue of Theorem~\ref{thm:main} holds without change. Set $\dim(V) = n$.

\begin{theorem} 
We have an isomorphism of $\GL(E) \times \GL(V) \times \GL(F)$-modules
\[
\rH_i(\fG_V(E, F); \bC) = \bigoplus_{\substack{\lambda, \mu\\ |\lambda| + |\mu| - i_{n}(\lambda, \mu) = i}} \bS_{\lambda^\dagger}(E) \otimes \bS_{[\tau_{n}(\lambda, \mu)]}(V) \otimes \bS_{\mu^\dagger}(F).
\]
\end{theorem}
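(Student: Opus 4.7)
The plan is to mimic the stable representation theory proof of Section~\ref{sec:proof2} almost verbatim, changing only the Koszul input. First, I would work in the category $\Rep(\GL)$ of polynomial representations of $\GL(\infty)$, whose simple objects $\bS_{[\lambda,\mu]}(\bV)$ are indexed by pairs of partitions, and form the Lie algebra object $\fG_\bV(E,F) = ((E \otimes \bV) \oplus (\bV^* \otimes F)) \oplus (E \otimes F)$ inside a suitable enlargement of $\Rep(\GL)$. The basic representation $\bV$ now has a nonzero companion $\bV^*$, so nothing plays the role of the forms $\bomega_\bV, \bomega_\bW$ from Section~\ref{sec:proof2}: the quadratic relations in the enveloping algebra instead arise directly from the bracket, which is induced by the natural contraction $\bV^* \otimes \bV \to \bC$.

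Next, I would compute $\Tor_i^{\rU(\fG_\bV(E,F))}(\bC, \bC)$ via Koszul duality as in Proposition~\ref{prop:stable-superhom}. Fix $n$ large and, after an appropriate parity flip playing the role of $\dagger$, identify $\rU(\fG_{V_n}(E,F))$ as Koszul dual to the quadratic algebra
\[
B_n = \Sym\bigl((E \otimes V_n) \oplus (V_n^* \otimes F)\bigr)/(E \otimes F),
\]
whose defining ideal is the image of $E \otimes F$ inside $\Sym^2((E \otimes V_n) \oplus (V_n^* \otimes F))$ induced by the pairing $V_n^* \otimes V_n \to \bC$. The fact that these quadrics form a regular sequence and that $B_n$ has the expected $\GL(E) \times \GL(V_n) \times \GL(F)$-equivariant decomposition
\[
(B_n)_i = \bigoplus_{|\lambda|+|\mu|=i} \bS_\lambda(E) \otimes \bS_{[\lambda,\mu]}(V_n) \otimes \bS_\mu(F)
\]
are both supplied by \cite[Theorem~2.1]{exceptional}. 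Since $\bS_{[\lambda,\mu]}(\bV) = \varinjlim_n \bS_{[\lambda,\mu]}(V_n)$ by \cite{infrank}, passing to the limit and reindexing by transposed partitions yields the stable calculation
\[
\rH_i(\fG_\bV(E,F); \bC) = \bigoplus_{|\lambda|+|\mu|=i} \bS_{\lambda^\dagger}(E) \otimes \bS_{[\lambda,\mu]}(\bV) \otimes \bS_{\mu^\dagger}(F).
\]

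To pass to finite $V$, I would apply the specialization functor $\Gamma_n \colon \Rep(\GL) \to \Rep(\GL(n))$ of \cite[(4.4.4)]{infrank} and argue exactly as in Proposition~\ref{prop:hypercoh}: the Chevalley--Eilenberg complex $\bK(\fG_\bV(E,F))_\bullet$ consists of injective objects (direct sums of mixed Schur functors in $\bV, \bV^*$ tensored with $\GL(E) \times \GL(F)$-modules), and $\Gamma_n$ carries it to $\bK(\fG_{V_n}(E,F))_\bullet$. Feeding the stable calculation and the $\GL$-modification rule \cite[\S 5.4]{lwood} into the hypercohomology spectral sequence produces
\[
\rE_2^{p,-q} = \bigoplus_{\substack{|\lambda|+|\mu|=q\\ i_n(\lambda,\mu)=p}} \bS_{\lambda^\dagger}(E) \otimes \bS_{[\tau_n(\lambda,\mu)]}(V_n) \otimes \bS_{\mu^\dagger}(F) \Rightarrow \rH_{q-p}(\fG_{V_n}(E,F); \bC).
\]
Since distinct pairs $(\lambda,\mu)$ give rise to distinct irreducibles $\bS_{\lambda^\dagger}(E) \otimes \bS_{\mu^\dagger}(F)$ of $\GL(E) \times \GL(F)$ (transposition being a bijection on partitions), Schur's lemma forces every differential on the $\rE_2$ page to vanish, and the claimed formula follows.

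The main subtlety I anticipate is Step~1 — pinning down the precise parity-shifted Lie superalgebra in $\Rep(\GL)$ whose universal enveloping algebra is Koszul dual to $B_n$, since $\Rep(\GL)$ does not carry an intrinsic anti-symmetric monoidal involution analogous to the $\Rep(\Sp) \leftrightarrow \Rep(\bO)$ equivalence used in Section~\ref{sec:proof2}. Once that Koszul pair is set up correctly, the injectivity of the Chevalley--Eilenberg complex, the specialization via $\Gamma_n$, and the $\rE_2$-degeneration are formally identical to the symplectic case.
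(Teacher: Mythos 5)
Your proposal is correct and follows essentially the same route as the paper, which itself only sketches this case as the calculation of \S\ref{sec:proof2} ``with some changes'': the replacement Koszul algebra $\Sym((E\otimes V_n)\oplus(V_n^*\otimes F))/(E\otimes F)$, the specialization functor $\Gamma_n$, and the multiplicity-free $\rE_2$ degeneration via Schur's lemma on the $\GL(E)\times\GL(F)$-isotypic pieces are exactly the changes the paper lists (though the paper cites \cite[Lemma 5.3 and \S 5.2]{lwood} rather than \cite{exceptional} for the complete-intersection property and the equivariant decomposition of $B_n$). The one subtlety you flag --- the analogue of $\dagger$ --- is resolved in the paper by an antisymmetric monoidal transpose \emph{autoequivalence} of $\Rep(\GL)$ sending $\bS_{[\lambda,\mu]}(\bV)$ to $\bS_{[\lambda^\dagger,\mu^\dagger]}(\bV)$ (see \cite[(2.2.14)]{infrank} and \cite[\S 7.4]{expos}), so the involution you were worried about does exist.
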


The calculation presented in \S\ref{sec:proof2} goes through with some changes. The equivalence $\dagger \colon \Rep(\Sp) \to \Rep(\bO)$ is replaced with an antisymmetric monoidal autoequivalence on $\Rep(\GL)$ that sends the simple object $\bS_{[\lambda, \mu]}(\bV)$ to $\bS_{[\lambda^\dagger, \mu^\dagger]}(\bV)$ \cite[Theorem 3.3.8]{infrank}.

The algebra $B$ in the proof of Proposition~\ref{prop:stable-superhom} is replaced by $\Sym((E \otimes V_{n}) \oplus (V_n^* \otimes F)) / (E \otimes F)$. This is a complete intersection whenever $n \ge \dim(E) + \dim(F)$ \cite[Lemma 5.3]{lwood} and its $\GL(E) \times \GL(V) \times \GL(F)$-equivariant decomposition can be found in \cite[\S 5.2]{lwood}. The specialization functors 
\[
\Gamma_n \colon \Rep(\GL) \to \Rep(\GL(n))
\]
\cite[(3.4.3)]{infrank} behave as in the symplectic case.

Alternatively, we can realize $\fG_V(E,F)$ as the nilpotent radical of the parabolic subalgebra of $\fgl(U)$, obtained by marking the $k$th and $(k+n)$th nodes of the Dynkin diagram ($k = \dim E$), and where $U = E \oplus V \oplus F$. We state the necessary facts so that the reader can carry out the calculation in \S\ref{sec:kostant} if desired.

A weight of $\fgl(U)$ is a sequence $\lambda \in \bC^{n+k+\ell}$ ($\ell = \dim F$) and it is a dominant integral weight precisely when $\lambda_1 \ge \cdots \ge \lambda_{n+k+\ell}$ and $\lambda_i \in \bZ$ for all $i$. We have
\begin{align*}
\rho = (n+k+\ell-1, n+k+\ell-2, \dots, 1, 0).
\end{align*}

The Weyl group is the symmetric group of all permutations acting on sequences of length $n+k+\ell$. The simple reflections that generate $W$ are as follows: for $i=1, \dots, n+k+\ell-1$, $s_i$ is the transposition that switches positions $i$ and $i+1$. Then the length function on $W$ is 
\[
\ell(w) = {\rm inv}(w).
\]
Finally, for $w \in W$, we have $w^{-1} \in W^P$ if and only if 
\[
w(\rho)_1 > \cdots > w(\rho)_k \quad \text{ and } \quad w(\rho)_{k+1} > \cdots > w(\rho)_{k+n} \quad \text{ and } \quad w(\rho)_{k+n+1} > \cdots > w(\rho)_{k+n+\ell}.
\]

\subsection{Lie superalgebra versions}

In all of the cases, we were mostly ambivalent about the dimension of the auxiliary vector space $E$ (and $F$ in \S\ref{sec:gl-version}). In fact, we can think about the space $V$ as being fixed and letting $\dim E$ and $\dim F$ grow to infinity. Alternatively, we can replace all of the representations $\bS_\lambda(E)$ by Schur functors $\bS_\lambda$ and all of the results above would carry over with no effort involved. Using the transpose duality \cite[\S 7.4]{expos} on the category of polynomial functors, we can replace all of the nilpotent Lie algebras studied in this paper with their corresponding Lie superalgebras (we have already done a little bit of this). All of the homology calculations are the same, except that we remove $\dagger$ from the notation.

\subsection{Recovering known results}

\subsubsection{Heisenberg Lie algebras}

When $\dim(E)=1$, the algebra $\fH = \fH_V(E)$ is what is usually called the Heisenberg Lie algebra. Its homology was calculated in \cite{heisenberg} where it was shown that $\dim_\bC \rH_i(\fH; \bC) = \binom{\dim V}{i} - \binom{\dim V}{i-2}$. We can get this from Theorem~\ref{thm:main} as follows. First, the term $\bS_{\lambda^\dagger}(E)$ is only nonzero (since $\dim(E)=1$) when $\lambda^\dagger = (i)$ (and so $\lambda = (1^i)$). Then $\tau_{2n}(1^i) = (1^i)$ and $i_{2n}(1^i) = 0$ if $0 \le i \le n$, and $\tau_{2n}(1^i) = (1^{2n+2-i})$ and $i_{2n}(1^i) = 1$ if $n+2 \le i \le 2n+2$. For all other $i$, we have $i_{2n}(1^i) = \infty$, and so
\[
\rH_i(\fH; \bC) = \begin{cases} \bS_{[1^i]}(V) & \text{if $0 \le i \le n$}\\
\bS_{[1^{2n+1-i}]}(V) & \text{if $n+1 \le i \le 2n+1$} \end{cases}.
\]
Since $\bS_{[1^i]}(V) = \bigwedge^i(V)$ if $i=0,1$ and is the cokernel of an injective map $\bigwedge^{i-2}(V) \to \bigwedge^i(V)$ otherwise, we recover the result.

\subsubsection{Free $2$-step nilpotent Lie algebras}

 When $V$ is orthogonal and $\dim(V) = 1$, the Lie algebra $\fI_V(E)$ is $E \oplus \bigwedge^2(E)$, which is the $2$-step truncation of the free Lie algebra on $E$. The homology of this algebra was calculated in several places, see for example \cite{GKT, JW, sigg}. In this case, $\bO(V) = \bZ/2$ and its representations are indexed by the trivial partition $(0)$ and the partition $(1)$. The modification rule calls for removing border strips of length $2\ell(\lambda) - 1$ (at the end, we may have to replace $(0)$ by $(1)$, see \cite[\S 4.4]{lwood} for details, but we may ignore this small point since it does not affect what follows). A simple induction argument shows that if $\lambda = \lambda^\dagger$ is self-dual, then it will reduce to $(0)$ by successively removing such border strips. On the other hand, by working backwards we see that these exhaust all partitions with this property. For the definition of $i_1(\lambda)$, if the border strips we remove are $R_1 ,\dots, R_N$, then $i_1(\lambda) = \sum_i (c(R_i) - 1)$ where $c(R_i)$ is the number of columns of $R_i$. Putting this together, one proves by induction that $i_1(\lambda) = (|\lambda| - \rank(\lambda))/2$ where $\rank(\lambda)$ is the size of the main diagonal of the Young diagram of $\lambda$. So one concludes 
\[
\rH_i(E \oplus \bigwedge^2(E); \bC) = \bigoplus_{\substack{\lambda = \lambda^\dagger\\ |\lambda|+\rank(\lambda)=2i}} \bS_\lambda(E).
\]

\end{document}